\def\R{{\mathbb{R}}}
\def\C{{\mathbb{C}}}
\newcommand{\defword}[1]{{\em #1}}
\newtheorem{example}[theorem]{Example}
\newtheorem{conjecture}[theorem]{Conjecture}
\crefname{hypothesis}{Hypothesis}{Hypotheses}
\title{Bistability of Sequestration Networks}
\author{Xiaoxian Tang\thanks{Department of Mathematics, Texas A\&M University, College Station, 77840 TX
  (\email{xiaoxian@math.tamu.edu}, \url{https://sites.google.com/site/rootclassification/}).}
\and Jie Wang\thanks{School of Mathematical Sciences, Peking University
  (\email{wangjie@math.pku.edu.cn}, \url{http://www.math.pku.edu.cn/teachers/wangjie/}).}
%\and Jane E. Smith\footnotemark[3]
}
\DeclareMathOperator{\diag}{diag}
\begin{document}

\maketitle

% REQUIRED
\begin{abstract}
 We  solve a  conjecture on multiple nondegenerate steady states, and prove bistability  for sequestration networks. More specifically, we prove that for any  odd number of species, and for any production factor, the fully open extension of a sequestration network admits three nondegenerate positive steady states, two of which are locally asymptotically stable. In addition, we provide a non-empty open set in the parameter space where a sequestration network admits bistability. %Our method for is based on purely linear algebra.  and can be applied to more general chemical reaction networks.
%Also, the constructive proof provides a method for computing a witness of bistability.
\end{abstract}

% REQUIRED
\begin{keywords}
multistationarity, bistability,   chemical reaction networks, mass-action kinetics, sequestration networks
\end{keywords}

% REQUIRED
\begin{AMS}
 % 68Q25, 68R10, 68U05
 92C40, 92C45
\end{AMS}

\section{Introduction}\label{sec:intro}
%{\color{blue}aimed readers in dynamic systems. look at Anne's paper with Timo.--Xiaoxian}
Bistability is  an important problem to determine for given dynamical systems arising under mass-action kinetics from biochemical reaction networks \cite{CTF2006, CS2018, DK2008}.
Biologically, bistability is crucial for understanding basic phenomena such as decision-making process in cellular sigaling \cite{BF2001, FM1998, XF2003}.
Mathematically, identifying parameter values/regions for which a system exhibits two (or more) stable steady states is a challenging problem in computational real
algebraic geometry \cite{signs}. A necessary condition for bistability is multistationarity (the system has at least two distinct steady states). In practice, one way to experimentally observe bistability is
finding multistationarity. In many lucky cases, a witness for multistationarity gives at least three  distinct steady states, two of which are stable (see \cite{CS2018, OSTT2019}).
 Criterions for multistationarity have been widely studied, and many structured networks are well-understood (such as ``smallest" networks with a few species or reactions \cite{Joshi:Shiu:Multistationary},
(linearly) binomial networks \cite{DMST2019,TSS,SF}, conservative networks without boundary steady states \cite{CFMW} and  MESSI networks \cite{messi}).
However, given a general network,  it is not always true that
multistationarity guarantees  bistability.

Here we use algebraic methods to study both multistationarity and bistability for a family of important networks arising from biology: the fully open extensions  of sequestration networks
(see \cite{mss-review}, and variations in \cite{CF2005,SF1994}), i.e., sequestration networks with
all inflow and outflow reactions:
\begin{align}\label{eq:Kmn} \notag
X_1 &\xrightarrow{r_1} mX_n  \\ \notag
X_1 + X_2 &\xrightarrow{r_2} 0 \\ \notag
&\vdots \\
X_{n-1} + X_n &\xrightarrow{r_n} 0
\end{align}
\begin{align}\label{eq:inout}
X_i \xrightarrow{r_{n+i}} 0,\;\;\;\;  0\xrightarrow{r_{2n+i}} X_i, \;\;\;\; i=1,\ldots,n.
\end{align}
We are the first to prove  the following results.
\begin{enumerate}[(I)]
  	\item For any production factor $m\geq 2$, and for any odd order $n\geq 3$, the fully open extension  of sequestration network admits three nondegenerate steady states (Theorem~\ref{thm:mss}).
	\item  For any production factor $m\geq 2$, and for any odd order $n\geq 3$, the fully open extension  of sequestration network  admits bistability (Theorem~\ref{thm:bistability}).
	\item For any production factor $m\geq 2$, and for any odd order $n\geq 3$,  we provide an open region in the parameter space where the fully open extension  of sequestration network  admits bistability (Theorem~\ref{thm:region}).
\end{enumerate}

The fully open extensions of sequestration networks were first introduced in \cite{mss-review}, which were motivated by biochemical networks studied in \cite{CF2005, SF1994}. %{\color{red}why they are called sequestration networks.}
 Our main result (I) solves Conjecture  6.10 proposed in
\cite{mss-review} (see \cite[Conjecture 2.10]{FSW}). There are many well-known criteria for multistationarity by applying positive parametrization (e.g., \cite{jmp2018, TG}) and examining
the sign change of determinant of the Jacobian matrix (e.g., \cite{BP,CFMW,ME3,DMST2019,Feliu-inj,signs,ShinarFeinberg2012,WiufFeliu_powerlaw}).
Under some assumptions, one of these results \cite[Theorem 1]{CFMW} (or \cite[Theorem 3.12]{DMST2019}), proved by the Brouwer degree theory, guarantees an odd number of steady states when a network exhibits
multistationarity. But in general  there was no proof showing at least three of these steady states are nondegenerate.
Here, we use a strong algebraic technique to construct three nondegenerate steady states for sequestration networks $\widetilde{K}_{m,n}$ (see Lemma \ref{lm}, Lemma \ref{lm:third}, and Theorem \ref{thm:mss}).

A standard algebraic tool for studying stability is the Routh-Hurwitz criterion (see \cite{HTX2015}), or alternatively the Li\'enard-Chipart criterion (see \cite{datta1978}). % {\color{blue}(add E. Feliu and Angelica's work)}.
Using these criteria, one examines  the positivity of some
gigantic determinants, which is computationally challenging (e.g., \cite{OSTT2019}).  Here, we discover a nice structure of the Jacobian matrices of  $\widetilde{K}_{m,n}$ at two of those three nondegenerate
steady states we constructed; specifically, they
are similar to diagonally dominant matrices. So, we are able to use the Gershgorin circle theorem to conclude stability  (see Lemma \ref{lm:similar3}, Lemma \ref{lm:similar5}, and Theorem \ref{thm:bistability}). We remark that the Gershgorin circle theorem can be used to study stability for more general reaction networks (see Theorem \ref{thm:matrix2}).
Also, we derive an open region in the parameter space for bistability, which is described by a set of positive solutions of finitely many polynomial inequalities in terms of rate constants (see Theorem \ref{thm:region}). We provide a procedure for computing a witness based on these inequalities and the proofs of Theorem \ref{thm:bistability}.

% NONDEGENERACY
Finally, our work is related to the following open questions:
If a network admits multiple positive steady states, does this guarantee that  the network admits multiple nondegenerate positive steady
states?  (See Nondegeneracy
Conjecture~\cite{Joshi:Shiu:Multistationary, shiu-dewolff}.)  If a network admits multiple nondegenerate positive steady states, under which condition does the network admit multiple stable positive steady states?

The rest of this paper is organized as follows.
In Section~\ref{sec:pre}, we introduce
mass-action kinetics systems  arising  from reaction networks.
In Section~\ref{sec:matrix}, we introduce an algebraic criterion for stability (Theorem \ref{thm:matrix2}), which is deduced by the classical Gershgorin circle theorem (Theorem \ref{thm:matrix1}).
In Section~\ref{sec:seqnet}, we recall a family of sequestration networks defined in \cite{mss-review}, and present our main results (I--III) (Theorems \ref{thm:mss}, \ref{thm:bistability} and \ref{thm:region}). %A list of explicit conditions in terms of rate constants is also presented, under which the fully open extension  of sequestration networks admit bistability.
%We provide a procedure for computing a witness based on these conditions.
In Section \ref{sec: proof}, we prove the main results in details. We end with a summary In Section \ref{con}.
%In Section~\ref{sec:procedure}, we provide an algorithm for computing witnesses of
%stability. The algorithm is followed from the proofs in the previous section.

\section{Reaction networks}\label{sec:pre}

%\subsection{Reaction networks}\label{sec:reaction}
%{\color{red} Put a more general definition for CRNs here.  Define mass-action ODEs. Steady states. Stability.}

In this section, we briefly recall the standard notions and definitions on reaction networks, see \cite{CFMW, DMST2019} for more details.
%As in~\cite{DPST}, our notation closely matches that of Conradi, Feliu, Mincheva, and Wiuf~\cite{CFMW}.
A \defword{reaction network} $G$  (or {\em network} for short) consists of a set of $s$ species $\{X_1, X_2, \ldots, X_s\}$ and a set of $m$ reactions:
\[
\alpha_{1j}X_1 +
\alpha_{2j}X_2 +  \dots +
\alpha_{sj}X_s
~ \xrightarrow{r_j} ~
\beta_{1j}X_1 +
\beta_{2j}X_2 +  \dots +
\beta_{sj}X_s,
 \;
    {\rm for}~
	j=1,2, \ldots, m,
\]
where all $\alpha_{ij}$ and $\beta_{ij}$ are non-negative integers. We call the $s\times m$ matrix with
$(i, j)$-entry equal to $\beta_{ij}-\alpha_{ij}$ the
\defword{stoichiometric matrix} of
$G$,
denoted by $N$.
%Let $d=s-{\rm rank}(N)$. %, and
We call the image of $N$
the \defword{stoichiometric subspace}, denoted by $S$.
%let $\mathrm{im}(N)^{\perp}$ denote the orthogonal complement of the image of $N$.
%$S^{\perp}$.
%$\mathrm{im}(N)^{\perp}$.

We denote by $x_1, x_2, \ldots, x_s$ the concentrations of the species $X_1,X_2, \ldots, X_s$, respectively.
Under the assumption of {\em mass-action kinetics}, we describe how these concentrations change  in time by following system of ODEs:
\begin{equation}\label{sys}
\dot{x}~=~f(x)~:=~N\cdot \begin{pmatrix}
r_1 \, x_1^{\alpha_{11}}
		x_2^{\alpha_{21}}
		\cdots x_s^{\alpha_{s1}} \\
r_2 \, x_1^{\alpha_{12}}
		x_2^{\alpha_{22}}
		\cdots x_s^{\alpha_{s2}} \\
		\vdots \\
r_m \, x_1^{\alpha_{1m}}
		x_2^{\alpha_{2m}}
		\cdots x_s^{\alpha_{sm}} \\
\end{pmatrix}~,
\end{equation}
where $x$ denotes the vector $(x_1, x_2, \ldots, x_s)$,
and each $r_j \in \mathbb R_{>0}$ is called a \defword{reaction rate constant}.
 By considering the rate constants as a vector $r=(r_1, r_2, \dots, r_m)$, we have polynomials $f_{i} \in \mathbb Q[r, x]$, for $i=1,2, \dots, s$.

A \defword{positive steady state} (or, simply \defword{steady state})\footnote{Usually, a steady state is defined as a non-negative vector $x\in {\mathbb R}_{\geq 0}^s$.
In our setting, we do not
consider boundary steady states (i.e., steady states with zero coordinates). So all steady states in our context are positive.} of~\eqref{sys} is a concentration-vector
$x^* \in \mathbb{R}_{> 0}^s$ at which $f(x)$ on the
right-hand side of the
ODEs~\eqref{sys}  vanishes, i.e., $f(x^*) =0$.
%A steady state $x^*$ is {\em nondegenerate} if ${\rm Im}\left( df_{\kappa} (x^*)|_{S} \right) = \St$.
%(Here, $df_{\kappa}(x^*)$ is the Jacobian matrix of $f_{\kappa}$ at $x^*$.)
%A nondegenerate steady state is
%{\em hyperbolic} if each of the $\sigma:= \dim (\St)$ nonzero eigenvalues of $df_{\kappa}(x^*)$ has nonzero real part and is
%{\em exponentially stable} if each of the $\sigma:= \dim(\St)$ nonzero eigenvalues of $df_{\kappa}(x^*)$ has negative real part.
%We say a steady state $x^*$ is a \defword{positive steady states} if $x ^* \in \mathbb{R}^s_{> 0}$. % and \defword{boundary steady states}
%$x^*\in {\mathbb R}_{\geq 0}^s\backslash {\mathbb R}_{>0}^s$.
We say a steady state $x^*$ is \defword{nondegenerate} if the image of
${\rm Jac}(f) (x^*)|_{S}$
is equal to the stoichiometric subspace $S$,
% = \St$.
where ${\rm Jac}(f)(x^*)$ denotes the Jacobian matrix of $f$, with respect to $x$, at $x^*$.
Notice that when the stoichiometric matrix $N$ is full rank, a steady state $x^*$ is nondegenerate if
  ${\rm Jac}(f)(x^*)$ is full rank.
%A nondegenerate steady state is
%{\em hyperbolic} if each of the $\sigma:= \dim (\St)$ nonzero eigenvalues of $df_{\kappa}(x^*)$ has nonzero real part and is
%\defword{exponentially stable} if every nonzero eigenvalue of ${\rm Jac}(f)(x^*)$ has a negative real part.
A steady state $x^*$ is said to be \defword{Liapunov stable} if for any $\epsilon>0$ and for any $t_0>0$, there exists $\delta>0$ such that
$\parallel x(t_0)-x^*\parallel<\delta$ implies $\parallel x(t)-x^*\parallel<\epsilon$ for any $t\geq t_0$.
A steady state $x^*$ is said to be \defword{locally asymtotically stable} if it is Liapunov stable, and there exists $\delta>0$  such that
$\parallel x(t_0)-x^*\parallel<\delta$ implies $\lim_{t\rightarrow \infty} x(t)=x^*$.
It is well-known that a steady state $x^*$ is
locally asymtotically stable if all eigenvalues of ${\rm Jac}(f)(x^*)$ have negative real parts.
%It is clear that asymtotically stability implies exponentially stability.

\section{A criterion for stability}\label{sec:matrix}
%\textcolor{red}{definition of stability?}
%For a matrix $A=(a_{ij})\in\R^{n\times n}$, the {\em dominant} of $A$ is defined as
%$$\dom(A):=\sum_{\sigma}(-1)^{\gen(\sigma)}|a_{1\sigma(1)}a_{2\sigma(2)}\dots a_{n\sigma(n)}|,$$
%where $\sigma$ runs over all $n$-permutations.
%\begin{example}
%For a $2\times2$ matrix $A$, $\dom(A)=|a_{11}a_{22}|-|a_{12}a_{21}|$. For a $3\times3$ matrix $A$,
%\begin{align*}
%\dom(A)=&|a_{11}a_{22}a_{33}|-|a_{11}a_{23}a_{32}|-|a_{13}a_{22}a_{31}|-|a_{12}a_{23}a_{31}|-\\
%&|a_{13}a_{21}a_{32}|-|a_{12}a_{21}a_{33}|.
%\end{align*}
%\end{example}
%
%\begin{theorem}\label{thm: matrix1}
%For $A=(a_{ij})\in\R^{n\times n}$, if $\dom(A)>0$, then $A$ is similar to a strictly diagonally-dominant matrix. Furthermore, if all dominant elements of $A$ are negative, then $A$ is stable.
%\end{theorem}
%\begin{proof}
%
%\end{proof}

%{\color{red} point out somewhere this criterion applies only when all eigenvalues are real!!}

\begin{definition}\label{def:disc}
Let $A=(a_{ij})\in\R^{n\times n}$ be a matrix. For every $i=1,\ldots,n$, define the $i$-th {\em row Gershgorin disc} of $A$ in the complex plane as the set
$$R_i := \{z\in\C:~ |z-a_{ii}|\le\sum_{j\ne i}|a_{ij}|~\}.$$%, which is called a {\em row Gershgorin disc}.
Similarly, define the $i$-th {\em column Gershgorin disc} of $A$ in the complex plane as the set
$$C_i:=\{z\in \C: |z-a_{ii}|\le\sum_{j\ne i}|a_{ji}|\}.$$ % be a {\em column Gershgorin disc} for $i=1,\ldots,n$.
\end{definition}

% The following theorem is known as the Gershgorin circle theorem.

\begin{theorem}\label{thm:matrix1}\cite[Gershgorin circle theorem]{gerschgorin1931}
The eigenvalues of a  matrix $A=(a_{ij})\in\R^{n\times n}$ lie in the union of row Gershgorin discs $\cup_{i}^nR_i$, and also lie in the union of column Gershgorin discs $\cup_{i}^nC_i$.
\end{theorem}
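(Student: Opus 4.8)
The plan is to prove the row-disc inclusion directly from the eigenvalue--eigenvector equation, and then obtain the column-disc inclusion by transposition. First I would fix an eigenvalue $\lambda\in\C$ of $A$ together with a nonzero eigenvector $v=(v_1,\ldots,v_n)^{\top}\in\C^n$, and choose an index $i$ at which $|v_i|$ is maximal among $|v_1|,\ldots,|v_n|$. Because $v\neq 0$, this choice forces $|v_i|>0$, which is exactly what makes the normalization in the next step legitimate.

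Next I would read off the $i$-th coordinate of the identity $Av=\lambda v$, namely $\sum_{j=1}^{n}a_{ij}v_j=\lambda v_i$, and isolate the diagonal term to get $(\lambda-a_{ii})\,v_i=\sum_{j\neq i}a_{ij}v_j$. Taking moduli, applying the triangle inequality, and dividing through by $|v_i|>0$ gives
\[
|\lambda-a_{ii}|\;\le\;\sum_{j\neq i}|a_{ij}|\,\frac{|v_j|}{|v_i|}\;\le\;\sum_{j\neq i}|a_{ij}|,
\]
where the last bound uses $|v_j|\le |v_i|$ for every $j$ by the maximality of $i$. Hence $\lambda\in R_i\subseteq\bigcup_{k=1}^{n}R_k$, which is the first assertion. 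Since $\lambda$ was an arbitrary eigenvalue, this covers the whole spectrum.

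For the column-disc statement I would invoke that $A$ and its transpose share a characteristic polynomial, $\det(zI-A)=\det(zI-A^{\top})$, so they have the same eigenvalues. Applying the row-disc inclusion already proved to $A^{\top}=(a_{ji})$ then places every eigenvalue of $A$ in $\bigcup_{k=1}^{n}\{z\in\C:\ |z-a_{kk}|\le\sum_{j\neq k}|a_{jk}|\}=\bigcup_{k=1}^{n}C_k$, as claimed.

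There is essentially no deep obstacle here; this is the classical argument. The only points that deserve a word of care are that the maximizing index yields $|v_i|>0$, so that the division is valid, and that transposition preserves the spectrum. A fully self-contained writeup would either cite these two standard linear-algebra facts or include their one-line justifications; alternatively, one may simply cite \cite{gerschgorin1931} and omit the proof entirely, since the statement is only used here as a black box feeding into Theorem~\ref{thm:matrix2}.
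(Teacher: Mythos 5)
Your argument is the standard, correct proof of the Gershgorin circle theorem: choose a coordinate of maximal modulus in an eigenvector, isolate the diagonal term, apply the triangle inequality, and pass to $A^{\top}$ for the column version. The paper itself gives no proof and simply cites \cite{gerschgorin1931}, so your writeup is a correct (and entirely classical) filling-in of what the paper treats as a black box.
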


\begin{definition}\label{def:dom}
Let $A=(a_{ij})\in\R^{n\times n}$ be a matrix. If for every $i=1, \ldots, n$,  $|a_{ii}|\ge\sum_{j\ne i}|a_{ij}|$ {\em (}or, $|a_{ii}|\ge\sum_{j\ne i}|a_{ji}|${\em )}, then $A$ is {\em row diagonally dominant} {\em (}or, {\em column diagonally dominant}{\em )}.
%If for every $i=1, \ldots, n$,  $|a_{ii}|\ge\sum_{j\ne i}|a_{ji}|$, then we say $A$ is {\em column diagonally dominant}.
\end{definition}

For a diagonally dominant matrix, we have a simple sufficient condition for its stability by virtue of the Gershgorin circle theorem.
\begin{lemma}\label{lm:matrix2}
Let $A=(a_{ij})\in\R^{n\times n}$ be a (row or column) diagonally dominant matrix. If $a_{ii}<0$ for every $i=1, \ldots, n$, then every nonzero eigenvalue of $A$ has a negative real part.
\end{lemma}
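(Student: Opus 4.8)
The plan is to apply the Gershgorin circle theorem (Theorem~\ref{thm:matrix1}) directly and then argue about the location of the resulting discs. Suppose first that $A$ is row diagonally dominant with $a_{ii}<0$ for every $i$. Let $\lambda$ be any eigenvalue of $A$. By Theorem~\ref{thm:matrix1}, there is an index $i$ such that $|\lambda - a_{ii}| \le \sum_{j\ne i}|a_{ij}|$. Combining this with the diagonal dominance inequality $\sum_{j\ne i}|a_{ij}| \le |a_{ii}| = -a_{ii}$ (using $a_{ii}<0$), we get $|\lambda - a_{ii}| \le -a_{ii}$. Geometrically, this says $\lambda$ lies in the closed disc centered at $a_{ii}$ on the negative real axis with radius $|a_{ii}|$, a disc whose rightmost point is exactly the origin. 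Hence every point of that disc has real part $\le 0$, so $\operatorname{Re}(\lambda)\le 0$.

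The remaining work is to upgrade $\operatorname{Re}(\lambda)\le 0$ to $\operatorname{Re}(\lambda)<0$ under the hypothesis $\lambda\ne 0$. The only way a point of the disc $\{z: |z-a_{ii}|\le -a_{ii}\}$ can have real part equal to $0$ is if it equals the unique boundary point $0$ on the imaginary axis — that is, if $\lambda=0$. Indeed, writing $\lambda = u+iv$ with $u=\operatorname{Re}(\lambda)=0$, the inequality $|\lambda-a_{ii}|^2 \le a_{ii}^2$ becomes $a_{ii}^2 + v^2 \le a_{ii}^2$, forcing $v=0$ and hence $\lambda = 0$, contradicting $\lambda\ne 0$. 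Therefore any \emph{nonzero} eigenvalue satisfies $\operatorname{Re}(\lambda)<0$, as claimed.

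For the column diagonally dominant case, the argument is identical after replacing the row discs $R_i$ by the column discs $C_i$: Theorem~\ref{thm:matrix1} also guarantees that the eigenvalues lie in $\cup_i C_i$, and the column dominance inequality $\sum_{j\ne i}|a_{ji}|\le |a_{ii}| = -a_{ii}$ plays exactly the role of the row inequality above. Alternatively one may note that $A$ column diagonally dominant is equivalent to $A^{\mathsf T}$ row diagonally dominant, with the same diagonal (hence the same sign condition), and $A$ and $A^{\mathsf T}$ have the same eigenvalues; so the column case reduces to the row case.

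There is essentially no obstacle here — the statement is a routine consequence of the Gershgorin circle theorem. The only point requiring a modicum of care is the strictness: one must observe that a Gershgorin disc anchored on the negative real axis and tangent to the imaginary axis meets the closed right half-plane only at the origin, so that the separation $\operatorname{Re}(\lambda)<0$ can fail only for the eigenvalue $\lambda=0$ itself. This is why the conclusion is stated for \emph{nonzero} eigenvalues: a diagonally dominant matrix with negative diagonal may well be singular (e.g.\ when the dominance is not strict), in which case $0$ is an eigenvalue lying precisely on the boundary of a Gershgorin disc.
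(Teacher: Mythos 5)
Your proof is correct and follows essentially the same route as the paper's: both apply the Gershgorin circle theorem and use the observation that a disc centered at $a_{ii}<0$ with radius at most $|a_{ii}|$ meets the closed right half-plane only at the origin (the paper phrases this as a contradiction for $\operatorname{Re}(\lambda)\ge 0$, you phrase it directly, and you additionally spell out the column case, which the paper dispatches with a ``without loss of generality'').
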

\begin{proof}

Let $\lambda$ be a nonzero eigenvalue of $A$.
Denote respectively the real and imaginary parts of $\lambda$ by $Re(\lambda)$ and $Im(\lambda)$. Then $Re(\lambda)\neq 0$, or $Im(\lambda)\neq 0$.  Without loss of generality, assume $A$ is  row diagonally dominant. Note for any $i$, $a_{ii}<0$. So if $Re(\lambda)\geq 0$, then
for any $i$, we have
\[|\lambda-a_{ii}|=\sqrt{(Re(\lambda)-a_{ii})^2 +Im(\lambda)^2}>a_{ii} \ge\sum_{j\ne i}|a_{ij}|,\]
which is a contradiction to Theorem \ref{thm:matrix1}. Hence, we must have $Re(\lambda)<0$.
%It is immediate from Theorem \ref{thm:matrix1}.
\end{proof}

\begin{theorem}\label{thm:matrix2}
If a matrix is similar to a (row or column) diagonally dominant matrix with negative diagonal entries, then all the nonzero eigenvalues have negative real parts.
\end{theorem}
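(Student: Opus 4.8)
The plan is to reduce Theorem~\ref{thm:matrix2} directly to Lemma~\ref{lm:matrix2} by exploiting the fact that similar matrices have the same eigenvalues (with the same algebraic multiplicities). Suppose $A \in \R^{n\times n}$ is similar to $B$, where $B$ is a (row or column) diagonally dominant matrix with $b_{ii} < 0$ for all $i$. Then there is an invertible matrix $P$ with $A = P^{-1} B P$, so $A$ and $B$ have exactly the same characteristic polynomial and hence the same set of eigenvalues. Thus it suffices to show every nonzero eigenvalue of $B$ has negative real part, which is precisely the conclusion of Lemma~\ref{lm:matrix2} applied to $B$.

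First I would recall the standard fact that if $A = P^{-1}BP$ then $\det(A - \lambda I) = \det(P^{-1}(B-\lambda I)P) = \det(B - \lambda I)$, so $\lambda$ is an eigenvalue of $A$ if and only if it is an eigenvalue of $B$. Then I would invoke Lemma~\ref{lm:matrix2}: since $B$ is diagonally dominant (in either the row or column sense) with strictly negative diagonal entries, every nonzero eigenvalue $\lambda$ of $B$ satisfies $Re(\lambda) < 0$. Combining the two observations, every nonzero eigenvalue of $A$ satisfies $Re(\lambda) < 0$, which is the desired statement.

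There is essentially no obstacle here — the theorem is a one-line corollary of Lemma~\ref{lm:matrix2} together with the similarity-invariance of the spectrum. The only point worth stating carefully is that ``nonzero eigenvalue'' is preserved under similarity (zero is an eigenvalue of $A$ iff it is an eigenvalue of $B$, since $\det A = \det B$), so the exception for the zero eigenvalue transfers cleanly. This mild bookkeeping is the entire content of the proof; the substantive work was already done in Lemma~\ref{lm:matrix2} via the Gershgorin circle theorem (Theorem~\ref{thm:matrix1}).

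For completeness I would also note in passing why the zero eigenvalue must genuinely be excluded: a diagonally dominant matrix with negative diagonal can be singular (for instance, a matrix whose rows sum to zero), so one cannot upgrade the conclusion to asymptotic stability of the associated linear system without an additional rank or nondegeneracy hypothesis — which is exactly why later applications of this theorem to the networks $\widetilde{K}_{m,n}$ will be paired with a nondegeneracy argument ensuring $\mathrm{Jac}(f)(x^*)$ restricted to the stoichiometric subspace is invertible.
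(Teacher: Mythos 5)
Your argument is correct and matches the paper's proof exactly: both reduce the theorem to Lemma~\ref{lm:matrix2} via the similarity-invariance of the spectrum. The extra remarks on the characteristic polynomial and the zero eigenvalue are fine elaborations of the same one-line reduction.
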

\begin{proof}
The conclusion directly follows from Lemma \ref{lm:matrix2} and the fact  that similar matrices have the same eigenvalues.
\end{proof}

\section{Sequestration networks and main results}\label{sec:seqnet}

\subsection{Preliminary}\label{sec:spre}

In this section, we recall sequestration networks $K_{m, n}$ \cite[Definition 6.3]{mss-review} and their fully open extensions $\widetilde{K}_{m, n}$ \cite[Definition 2.3]{mss-review}.

\begin{definition}\label{def:seqnet}
For any integer $m\geq 1$, and for any integer $n\geq 2$, the {\em sequestration network} $K_{m, n}$ of order $n$ with production factor $m$ is defined to be the network \eqref{eq:Kmn}.
If we add into \eqref{eq:Kmn} all inflow reactions and outflow reactions \eqref{eq:inout},
then we obtain the {\em fully open extension} of $K_{m,n}$, denoted by $\widetilde{K}_{m,n}$.
\end{definition}

According to \eqref{sys}, the mass-action ODEs $\dot{x}=f(x)$  of $\widetilde{K}_{m,n}$ are given by:
\begin{equation}\label{eq:oldsystem}
\begin{cases}
%\frac{dx_1}{dt}
f_1~=~-r_1x_1x_2-r_nx_1-r_{n+1}x_1+r_{2n+1},\\
%\frac{dx_i}{dt}
f_i\,~=~-r_{i-1}x_{i-1}x_{i}-r_{i}x_{i}x_{i+1}-r_{n+i}x_{i}+r_{2n+i},
~\textrm{ for }~2\le i\le n-1,\\
%\frac{dx_n}{dt}
f_n~=~-r_{n-1}x_{n-1}x_n+mr_{n}x_{1}-r_{2n}x_n+r_{3n}.
\end{cases}
\end{equation}
The Jacobian matrix of $f$ with respect to $x_1,\ldots,x_n$ below is simply denoted by $J$:
\begin{equation}\label{eq:jac}
%J:=
{\tiny
\begin{bmatrix}
-r_1x_2-r_n-r_{n+1}&-r_1x_1&\cdots&0&0\\
-r_1x_2&-r_1x_1-r_2x_3-r_{n+2}&\cdots&\vdots&\vdots\\
0&-r_2x_3&\ddots&0&0\\
\vdots&0&\ddots&-r_{n-2}x_{n-2}&0\\
0&\vdots&\ddots&-r_{n-2}x_{n-2}-r_{n-1}x_n-r_{2n-1}&-r_{n-1}x_{n-1}\\
mr_n&0&\cdots&-r_{n-1}x_n&-r_{n-1}x_{n-1}-r_{2n}
\end{bmatrix}.
}
\end{equation}

\begin{definition}\label{def:mss}
The network $\widetilde{K}_{m,n}$ is \defword{multistationary} % "multistationary" is the adjective, "multistationarity" is the noun
(respectively, \defword{bistable})
if, for some choice of positive rate-constant vector $r \in \mathbb{R}^{3n}_{>0}$, there exist  two or more positive steady states (respectively, locally asymptotically stable positive steady states) of~\eqref{eq:system}.
\end{definition}

It is known that  for any integers $m\geq 1$, and for any  integers $n\geq 2$,  $\widetilde{K}_{m,n}$ is multistationary if and only if $m>1$ and $n$ is odd \cite[Theorem 6.4]{mss-review}.
There is a conjecture that  {\bf  for any integers $m\geq 2$, and for odd integers $n\geq 3$,  $\widetilde{K}_{m,n}$ admits multiple nondegenerate steady states \cite[Conjecture 6.10]{mss-review}}.
Notice that the stoichiometric matrix $N$ of $\widetilde{K}_{m,n}$ is full rank (e.g., see \cite[Formula (4)]{FSW}), so the conjecture says
for some  rate-constant vector $r^* \in \mathbb{R}^{3n}_{>0}$, there exist at least two positive steady states $x^{(1)}$ and $x^{(2)}$ such that $\det J|_{r=r^*, x=x^{(i)}}\neq 0,i=1,2$.
For any integers $m\geq 2$, and for $n=3$, the conjecture was resolved in \cite[Theorem 4.5]{FSW}.
For $m=2, 3, 4, 5$, and for $n=5, 7, 9, 11$, the conjecture was proved in \cite[Theorem 5.1]{FSW}.

\subsection{Main results}\label{sec:main}

Our main results are a proof of \cite[Conjecture 6.10]{mss-review} (see Theorem \ref{thm:mss}) and a bistability result for sequestration networks (see Theorem \ref{thm:bistability}).
Also, we provide an open region in the parameter space where $\widetilde{K}_{m,n}$ admits bistability (see Theorem \ref{thm:region}).
The proofs of these results are given later in Section \ref{sec: proof}. %We summarize these conditions in Remark  \ref{rmk:condition}.
A procedure for computing a witness for bistability is presented (see {\bf Procedure Witness}).
A concrete example of $\widetilde{K}_{m, n}$ with two  locally asymptotically stable steady states   is given (see Example \ref{example}),  which is not covered by \cite[Theorem 4.5]{FSW} or  \cite[Theorem 5.1]{FSW}

\begin{theorem}\label{thm:mss}
%For any odd integer $n\geq 3$, and for any integer $m\geq 2$, $\widetilde{K}_{m,n}$ admits three nondegenerate steady states. %{\color{red} (Conjecture 2.10 in \cite{FSW}?)}
For any integer $m\geq 2$, if for any odd integer $n>3$, the rate-constant vector $(r_1, \ldots, r_{n}, r_{n+2}) \in {\mathbb R}^{n+1}_{>0}$ belongs to the open set determined by the following polynomial inequalities
\eqref{eq:nondegenerate}--\eqref{eq:sec3-eq12}
\begin{align}
%\begin{cases}
(r_1+r_n)r_{n+2}~&\neq~(m-1)r_1r_n, \label{eq:nondegenerate}\\
r_{n-1}~&>~mr_n, \label{sec-eq0}\\
(m-1)r_1~&>~r_{n+2}, \label{eq:conie-eq11}\\
 (m-1)r_1(r_{i-1}+(-1)^imr_{n})~&>~(-1)^{i}m(r_1+r_{n})r_{n+2}, \;\;i=3,\ldots,n, \label{eq:conie-eq12}\\
r_1+r_{n+2}~&>~r_{n-2},\label{eq:sec3-eq11}\\
r_i~&>~r_{n-2},\;\;\;\;\;\;\;\;\;\;\;\;\;\;\;\;\;\; i=3,5,\ldots,n-4,\label{eq:sec3-eq12} %\\
%(m-1)r_1r_{n-2}+m(m-1)r_1r_n~&>~m(r_1+r_n)r_{n+2}, \label{eq:sec3-eq13}
%\end{cases}
\end{align}
or, if for $n=3$, the rate-constant vector $(r_1, r_2, r_3, r_5) \in {\mathbb R}^4_{>0}$ belongs to the open set determined by the polynomial inequalities \eqref{eq:nondegenerate}--\eqref{eq:conie-eq12},
%\begin{align}\label{eq:condn3}
%(m-1)r_1~&>~r_5,
%\end{align}
then there exist rate constants $r_{n+1}, r_{n+3}, \ldots, r_{3n}>0$  such that $\widetilde{K}_{m,n}$ has three nondegenerate steady states.
Moreover,  the above  open set  in ${\mathbb R}_{>0}^{n+1}$ is non-empty.
\end{theorem}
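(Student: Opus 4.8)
\emph{Setup and reduction.} I would split the statement into its existence half (every rate vector in the region completes to one for which $\widetilde{K}_{m,n}$ has three nondegenerate steady states) and its non-emptiness half, treating the first by an explicit construction and the second by exhibiting one point. For the existence half, the starting observation is that in \eqref{eq:oldsystem} each inflow constant $r_{2n+i}$ occurs only in $f_i$, and linearly; hence, after any choice of $r_1,\dots,r_{2n}>0$, every point $c\in\R^n_{>0}$ is made a steady state by a unique inflow vector $(r_{2n+1},\dots,r_{3n})$, and solving $f_n=0$ for $r_{3n}$ at $c=\mathbf{1}:=(1,\dots,1)$ yields a positive value because of \eqref{sec-eq0}. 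So I would normalize one of the three target steady states to $\mathbf{1}$, fix the inflows accordingly, and reduce the problem to: choose the $n-1$ remaining outflow constants $r_{n+1},r_{n+3},\dots,r_{2n}>0$ so that $\widetilde{K}_{m,n}$ gains two more positive steady states, all three pairwise distinct and each nondegenerate. A structural fact that organizes the work: since $n$ is odd, the stoichiometric matrix of $\widetilde{K}_{m,n}$ has a near-kernel vector $(1,-1,1,\dots,1)$ on which the production reaction acts with weight $m-1$, so the alternating combination $f_1-f_2+f_3-\cdots+f_n$ is affine-linear in $x$; every steady state therefore lies on one explicit hyperplane, which trims the system and is reflected in the alternating signs and the quantity $(m-1)r_1$ entering \eqref{eq:conie-eq11}--\eqref{eq:sec3-eq12}.

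\emph{The construction.} With the inflows fixed, I would solve $x_2$ from $f_1=0$ and then $x_3,\dots,x_n$ successively from $f_2=0,\dots,f_{n-1}=0$, obtaining explicit rational functions $x_i=\phi_i(x_1)$ with polynomial numerators and denominators in $x_1$ and the rate constants; clearing denominators in $f_n=0$ yields a univariate polynomial $g(x_1)$ with $g(1)=0$, whose positive roots $t$ with all $\phi_i(t)>0$ are exactly the steady states. The remaining free outflow constants would then be used to make $g(x_1)/(x_1-1)$ acquire two further simple positive roots while keeping control of its sign on $(0,\infty)$: one prescribes a near-alternating coordinate profile for the two extra steady states (coordinates oscillating about $1$, which is precisely what makes the quadratic sequestration balances $f_i=0$ simultaneously solvable with positive coordinates when $n$ is odd), reads off the outflow constants this forces, and verifies positivity of all coordinates. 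That verification is what the remaining inequalities encode: \eqref{eq:conie-eq11}--\eqref{eq:conie-eq12} are the positivity conditions for the coordinates of the second steady state (and $(r_1+r_n)r_{n+2}-(m-1)r_1r_n$, the quantity in \eqref{eq:nondegenerate}, reappears up to sign on the right-hand side of \eqref{eq:conie-eq12}), while for $n>3$ the inequalities \eqref{eq:sec3-eq11}--\eqref{eq:sec3-eq12} are the positivity conditions for the odd-indexed coordinates of the third steady state. I would package this as two lemmas, one producing the second steady state (Lemma~\ref{lm}) and one the third (Lemma~\ref{lm:third}), each also recording that the associated root of $g$ is simple.

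\emph{Nondegeneracy.} Since the stoichiometric matrix of $\widetilde{K}_{m,n}$ is full rank, a positive steady state is nondegenerate iff $\det J\ne 0$ there, with $J$ the Jacobian \eqref{eq:jac}. Along the parametrization $x=\phi(t)$ one has $\det J|_{x=\phi(t)}=c(t)\,g'(t)$ with $c(t)\ne 0$ at the relevant positive $t$, so the two steady states from the construction are nondegenerate because the corresponding roots of $g$ are simple; moreover $(r_1+r_n)r_{n+2}-(m-1)r_1r_n$ is, up to a positive factor, the value of $g(x_1)/(x_1-1)$ at $x_1=1$, so \eqref{eq:nondegenerate} says exactly that $x_1=1$ is a simple root of $g$ --- equivalently, the normalized steady state is nondegenerate and the other two do not collide with it. Hence all three steady states are nondegenerate.

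\emph{Non-emptiness, and the main obstacle.} The region is cut out by finitely many strict polynomial inequalities together with the single polynomial non-equation \eqref{eq:nondegenerate}, hence is open; it remains to exhibit one point. Fix $m\ge 2$ and arbitrary $r_1,r_n>0$; set $r_j:=mr_n+1$ for every $j\in\{2,\dots,n-1\}$ with $j\ne n-2$ (so $r_{n-1}=mr_n+1$, giving \eqref{sec-eq0}), set $r_{n-2}:=\varepsilon$ when $n\ge 5$, and set $r_{n+2}:=\varepsilon$ with $\varepsilon>0$ small. Then \eqref{eq:conie-eq11} and \eqref{eq:sec3-eq11} hold because their right-hand sides are at most $\varepsilon$ while their left-hand sides are bounded below by fixed positive constants; \eqref{eq:sec3-eq12} holds because each relevant $r_i=mr_n+1>\varepsilon$; \eqref{eq:nondegenerate} holds for small $\varepsilon$ since its left-hand side $(r_1+r_n)\varepsilon$ tends to $0$ while its right-hand side is $(m-1)r_1r_n>0$; and in \eqref{eq:conie-eq12} the right-hand side $(-1)^i m(r_1+r_n)\varepsilon$ tends to $0$ with $\varepsilon$, while for $i$ even the left-hand side is $(m-1)r_1(r_{i-1}+mr_n)>(m-1)r_1mr_n>0$, and for $i$ odd --- so $i-1$ is even, hence $i-1\ne n-2$ and $r_{i-1}=mr_n+1$ --- the left-hand side equals $(m-1)r_1>0$ against a nonpositive right-hand side. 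Thus the region is non-empty. The main obstacle is the construction step: extracting from the high-degree elimination polynomial $g$ two explicit positive roots whose simplicity can be certified, and identifying the resulting positivity conditions --- this is the ``strong algebraic technique'' alluded to in the introduction, and the place where the nontrivial inequalities \eqref{eq:conie-eq11}--\eqref{eq:sec3-eq12} are forced.
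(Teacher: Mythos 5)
There is a genuine gap, and you have in fact flagged it yourself: the entire construction of the second and third steady states is left as ``the main obstacle.'' Your plan --- eliminate $x_2,\dots,x_n$ triangularly to get a univariate polynomial $g(x_1)$, then ``use the remaining free outflow constants to make $g(x_1)/(x_1-1)$ acquire two further simple positive roots'' by ``prescribing a near-alternating coordinate profile'' --- is not an argument. Nothing in the proposal shows that such a profile is compatible with positivity of all coordinates and all rate constants, nor that the resulting roots are simple, nor how the specific inequalities \eqref{eq:conie-eq11}--\eqref{eq:sec3-eq12} arise from it. The framing around this gap (normalizing one steady state to $(1,\dots,1)$ via the inflows, the affine-linearity of the alternating sum $\sum(-1)^{j-1}f_j$, nondegeneracy via full rank of $N$, and the explicit witness for non-emptiness, which checks out) is all sound, but it is scaffolding around the theorem's actual content.

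The missing idea in the paper is a degeneration-and-perturbation argument. One sets $r_{n+1}=r_{n+3}=\cdots=r_{2n}=0$; in this limit the alternating sum collapses to $(m-1)r_nx_1+r_{n+2}x_2-(m-1)r_n-r_{n+2}$, so combining it with $f_1=0$ gives an explicit \emph{quadratic} in $x_1$ with roots $1$ and $\delta_1=\tfrac{(r_1+r_n)r_{n+2}}{(m-1)r_1r_n}$, and the remaining coordinates of the second steady state are read off from partial alternating sums $\sum_{j<i}(-1)^{j-1}f_j=0$; the inequalities \eqref{eq:conie-eq11}--\eqref{eq:conie-eq12} are exactly positivity of these coordinates, and \eqref{eq:nondegenerate} is exactly $\delta_1\neq 1$ together with $\det J\neq 0$ at both points (computed in closed form via a tridiagonal determinant identity). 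The third steady state has $x_n\to+\infty$ in this limit; it is made visible by the substitution $x_{n-1}=r_{2n}y_{n-1}/y_n$, $x_n=y_n/r_{2n}$, after which the transformed system has an explicit nondegenerate positive solution under \eqref{eq:sec3-eq11}--\eqref{eq:sec3-eq12}. Finally one sets the zeroed outflows to a small $\epsilon>0$ and invokes the implicit function theorem to perturb all three nondegenerate solutions back into the fully open network. Without some such mechanism for producing the extra roots explicitly, your elimination polynomial $g$ is of high degree and uncontrolled, and the theorem is not proved. (A smaller unproved step: your identity $\det J|_{x=\phi(t)}=c(t)\,g'(t)$ with $c(t)\neq 0$ also needs justification; the nonvanishing of $c$ is not automatic.)
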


\begin{remark}
As mentioned before, for $n=3$,   the original conjecture (\cite[Conjecture 6.10]{mss-review}) was already proved in \cite[Theorem 4.5]{FSW}.
However, we still provide a self-contained proof in Section \ref{sec: proof} because we need the construction of three nondegenerate steady states  shown in our proof to demonstrate the bistability result (see Theorem \ref{thm:bistability}).
\end{remark}

\begin{remark}
If we replace the condition \eqref{eq:nondegenerate} listed in Theorem \ref{thm:mss} with the inequality \eqref{eq:constable1} below, then we can conclude that there are two stable steady states among the three nondegenerate steady states stated in Theorem \ref{thm:mss}, where one of the two stable steady states is $(1, 1, \ldots, 1)$ (see Theorem \ref{thm:bistability}).
\end{remark}

\begin{theorem}[Bistability]\label{thm:bistability}
 For any integer $m\geq 2$, if for any odd integer $n>3$, the rate-constant vector $(r_1, \ldots, r_{n}, r_{n+2}) \in {\mathbb R}^{n+1}_{>0}$
belongs to the open set determined by the polynomial inequalities
\eqref{sec-eq0}--\eqref{eq:sec3-eq12} and the following inequality
\begin{align}
%\begin{cases}
(r_1+r_n)r_{n+2}~&>~(m-1)r_1r_n, \label{eq:constable1}
%\end{cases}
\end{align}
or, if for $n=3$, the rate-constant vector $(r_1, r_2, r_3, r_5) \in {\mathbb R}^4_{>0}$
 belongs to the open set determined by the polynomial inequalities \eqref{sec-eq0}--\eqref{eq:conie-eq12} and \eqref{eq:constable1},
then there exist rate constants $r_{n+1}, r_{n+3}, \ldots, r_{3n}>0$  such that $\widetilde{K}_{m,n}$ has
two locally asymptotically stable steady states, and one of these two steady states is $(1, 1, \ldots, 1)$. Moreover,  the above  open set  in ${\mathbb R}_{>0}^{n+1}$ is non-empty.
\end{theorem}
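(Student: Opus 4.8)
The plan is to build on the construction from Theorem~\ref{thm:mss} and upgrade it so that two of the three nondegenerate steady states are locally asymptotically stable, with one of them normalized to be $(1,1,\ldots,1)$. First I would exploit the degrees of freedom in the inflow/outflow constants $r_{n+1},r_{n+3},\ldots,r_{3n}$: given the "shape" parameters $(r_1,\ldots,r_n,r_{n+2})$ satisfying the stated inequalities, I would choose the remaining rate constants so that $x^{(1)}=(1,\ldots,1)$ is forced to be a steady state of \eqref{eq:oldsystem} — this is just solving the linear equations $f_i(1,\ldots,1)=0$ for the $r_{2n+i}$'s and is exactly the normalization used in the proof of Theorem~\ref{thm:mss}. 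The remaining freedom (and the content of inequalities \eqref{sec-eq0}--\eqref{eq:sec3-eq12}) should then be used to place two further positive steady states $x^{(2)},x^{(3)}$, with one of them, say $x^{(2)}$, being the one whose stability I can control via the matrix criterion of Section~\ref{sec:matrix}.

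The heart of the argument is stability, and here the strategy is to avoid Routh--Hurwitz entirely and instead show that the Jacobian $J$ of \eqref{eq:jac}, evaluated at $x^{(1)}=(1,\ldots,1)$ and at $x^{(2)}$, is \emph{similar to a diagonally dominant matrix with negative diagonal entries}; then Theorem~\ref{thm:matrix2} immediately gives that all nonzero eigenvalues have negative real part, and since $N$ is full rank the steady states are nondegenerate so $J$ has no zero eigenvalue, yielding local asymptotic stability. Concretely, at $(1,\ldots,1)$ the matrix $J$ has tridiagonal-plus-corner structure with diagonal entries $-r_1-r_n-r_{n+1}$, $-r_{i-1}-r_i-r_{n+i}$, and $-r_{n-1}-r_{2n}$, all manifestly negative; the off-diagonal entries in row/column $i$ are built from $r_{i-1},r_i$ (and the corner entry $mr_n$). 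Row or column diagonal dominance will \emph{not} hold on the nose because of the production term $mr_n$ in the $(n,1)$ entry, so I would conjugate $J$ by a diagonal matrix $D=\diag(d_1,\ldots,d_n)$ with carefully chosen positive weights $d_i$ — this is the "similar to" in Theorem~\ref{thm:matrix2} — to absorb the oversized entry. The inequalities \eqref{sec-eq0}--\eqref{eq:sec3-eq12}, in particular $r_{n-1}>mr_n$ and the running conditions on $r_i$ versus $r_{n-2}$, are precisely what is needed for such a diagonal rescaling to exist; I would produce an explicit $D$ (likely with entries that are ratios/products of the $r_i$) and then verify the disc inequalities $|d_i^{-1} a_{ii} d_i| \ge \sum_{j\ne i} |d_j^{-1} a_{ij} d_i|$ row by row. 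The same scaling trick, applied at $x^{(2)}$ (whose coordinates are determined by the construction and are again expressible in the $r_i$'s), handles the second stable steady state; the extra inequality \eqref{eq:constable1}, which replaces the nondegeneracy condition \eqref{eq:nondegenerate} with a strict inequality in the "stable" direction, is what guarantees $\det J|_{x^{(2)}}\ne 0$ with the correct sign so that the Gershgorin argument applies rather than merely giving $Re(\lambda)\le 0$.

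For the final "moreover" — non-emptiness of the open set — I would check that the inequalities \eqref{sec-eq0}--\eqref{eq:sec3-eq12} together with \eqref{eq:constable1} are simultaneously satisfiable: pick $r_n$ small, $r_{n-1}$ large, $r_1$ large relative to $r_{n+2}$, and the intermediate $r_i$ ($i$ odd, $3\le i\le n-4$) all larger than $r_{n-2}$, which is consistent because $r_{n-2}$ can be taken as small as one likes; then \eqref{eq:constable1} is a further strict inequality among $r_1,r_n,r_{n+2}$ that cuts out a nonempty open cone inside the region already carved out. Since all constraints are strict polynomial inequalities and I can exhibit a point satisfying them (essentially a perturbation of a convenient base point such as all $r_i$ near $1$ except $r_n$ near $0$ and $r_{n-1}$ large), the region is a nonempty open set.

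The main obstacle I anticipate is finding the right diagonal conjugating matrix $D$: the Jacobian is not symmetric, the corner term $mr_n$ couples the last and first coordinates, and the "chain" structure of the tridiagonal part means that rescaling to fix row $i$ perturbs the dominance margins in rows $i-1$ and $i+1$; getting a single $D$ that makes \emph{every} Gershgorin disc lie strictly in the left half-plane simultaneously will require the inequalities \eqref{sec-eq0}--\eqref{eq:sec3-eq12} to be used in a coordinated, possibly telescoping, way (e.g. $d_i/d_{i-1}$ chosen as a function of the partial products of $r_j$'s), and verifying this cleanly at the non-trivial steady state $x^{(2)}$ — where the coordinates are not all $1$ — is where the real bookkeeping lies. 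I expect this to be Lemma~\ref{lm:similar3} and Lemma~\ref{lm:similar5} in the paper, split by residue of $n$ modulo $4$ or by parity of certain indices, which explains the case distinction $n=3$ versus $n>3$ in the statement.
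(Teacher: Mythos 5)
Your overall framework --- normalize so that $(1,\ldots,1)$ is a steady state via \eqref{eq:coneq}, then prove stability by conjugating the Jacobian with a diagonal matrix $D$ to reach a diagonally dominant matrix and invoking Theorem~\ref{thm:matrix2} --- is exactly the paper's strategy, and your reading of the role of \eqref{eq:constable1} for the stability of $(1,\ldots,1)$ is correct (Lemma~\ref{lm:1stable} uses precisely a one-entry rescaling $D=\diag(1+\tfrac{r_{n+2}}{r_1x_1},1,\ldots,1)$, and \eqref{eq:constable1} is exactly the first Gershgorin column inequality at $x^{(1)}$). However, there is a genuine error in your choice of the \emph{second} stable steady state. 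You propose to run the scaling argument at $x^{(2)}=(\delta_1,\ldots,\delta_n)$, but under the hypothesis \eqref{eq:constable1} this state cannot be stable: by Lemma~\ref{lm:J0}, $\det J|_{x=x^{(2)}}=r_2\cdots r_{n-1}\delta_2\cdots\delta_{n-1}\bigl((r_1+r_n)r_{n+2}-(m-1)r_1r_n\bigr)>0$, and since $n$ is odd a real $n\times n$ matrix whose eigenvalues all have negative real parts must have negative determinant. So any attempt to make $J|_{x=x^{(2)}}$ similar to a diagonally dominant matrix with negative diagonal is doomed under \eqref{eq:constable1}; that state is the unstable one. (The paper's Remark~\ref{rmk:condition} notes that $x^{(2)}$ becomes the stable one only under the \emph{reversed} inequality \eqref{eq:constable2}.)

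The missing idea is that the second stable state is the ``third'' steady state $\hat x^{(3)}=(\hat\xi_1,\ldots,\hat\xi_{n-2},\tfrac{\epsilon\hat\xi_{n-1}}{\hat\xi_n},\tfrac{\hat\xi_n}{\epsilon})$, the one recovered from the blow-up substitution \eqref{eq:mapv} whose last coordinate tends to $+\infty$ as $\epsilon\to 0$. Its stability (Lemmas~\ref{lm:similar3} and \ref{lm:similar5}) does not come from the inequalities \eqref{sec-eq0}--\eqref{eq:sec3-eq12} at all --- it holds for \emph{any} positive $r_1,\ldots,r_n,r_{n+2}$ and any positive $\hat\xi$, because after conjugation the one or two Gershgorin inequalities that are not automatic degenerate to $0\le(\text{positive})$ in the limit $\epsilon\to 0$, hence hold for $\epsilon$ small. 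The inequalities \eqref{sec-eq0}--\eqref{eq:sec3-eq12} serve only to guarantee the existence and positivity of $\hat x^{(3)}$ (and $x^{(2)}$), and \eqref{sec-eq0} additionally keeps $r_{3n}>0$; also the case split in the paper is simply $n=3$ versus $n>3$ (different forms of the reduced system $g$), not a residue condition mod~$4$. Your non-emptiness argument is fine, but as written the stability half of your proof targets a steady state that the hypotheses force to be unstable, so the argument would fail without redirecting it to $\hat x^{(3)}$ and exploiting the small-$\epsilon$ degeneration.
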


 In Theorem \ref{thm:bistability}, it is obvious that  the set of positive solutions of the inequalities \eqref{sec-eq0}--\eqref{eq:constable1} for $n>3$ (or, the inequalities \eqref{sec-eq0}--\eqref{eq:conie-eq12} and \eqref{eq:constable1} for $n=3$) is an open set in ${\mathbb R}_{>0}^{n+1}$.
%However,  it is not obvious that the open set is non-empty.
In order to make it more obvious to see the open set is non-empty, we provide Theorem \ref{thm:region}, which explicitly describe the positive solutions of  the inequalities stated in Theorem \ref{thm:bistability}.

\begin{theorem}\label{thm:region}
For any integer $m\geq 2$, and for $n=3$, the open set  in ${\mathbb R}_{>0}^{n+1}$ determined by  the inequalities by \eqref{sec-eq0}--\eqref{eq:conie-eq12} and \eqref{eq:constable1} in Theorem \ref{thm:bistability} is equivalent to the following set:
\begin{equation}\label{eq3:prop1}
(r_1, r_2, r_3, r_{5}) \in {\mathbb R}^{4}_{>0}:\begin{cases}
r_{5}~<~(m-1)r_1,\\
r_{2}~>~mr_3.
\end{cases}
\end{equation}
For any integer $m\geq 2$, and for any odd integer $n>3$,  the open set  in ${\mathbb R}_{>0}^{n+1}$ determined by the inequalities \eqref{sec-eq0}--\eqref{eq:constable1} in Theorem \ref{thm:bistability} is equivalent to the following set:
\begin{equation}\label{eq3:prop}
(r_1, \ldots, r_{n}, r_{n+1}) \in {\mathbb R}^{n+1}_{>0}:\begin{cases}
r_{n+2}~<~(m-1)r_1,\\
r_n~<~\frac{r_1r_{n+2}}{(m-1)r_1-r_{n+2}},\\
r_{n-1}~>~mr_n,\\
\frac{m((r_1+r_n)r_{n+2}-(m-1)r_1r_n)}{(m-1)r_1}~<~r_{n-2}<r_1+r_{n+2},\\
r_i~>~r_{n-2},\;\;\;\;\;\;\; \;\;\;\textrm{ for }i=3,5,\ldots,n-4. %\\
%(m-1)r_1r_{n-2}+m(m-1)r_1r_n~&>~m(r_1+r_n)r_{n+2}, \label{eq:sec3-eq13}
\end{cases}
\end{equation}
\end{theorem}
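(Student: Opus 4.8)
The plan is to treat Theorem \ref{thm:region} as what it is --- an elementary statement about a system of polynomial inequalities in the coordinates $r_1,\dots,r_n,r_{n+2}$ --- and to establish the two set inclusions directly, after normalizing each inequality in the list \eqref{sec-eq0}--\eqref{eq:constable1}. Throughout the argument it is convenient to abbreviate $\Delta := (r_1+r_n)r_{n+2}-(m-1)r_1r_n$, so that \eqref{eq:constable1} is literally the assertion $\Delta>0$.

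For odd $n>3$ I would proceed as follows. Since \eqref{eq:conie-eq11} gives $(m-1)r_1-r_{n+2}>0$, the inequality \eqref{eq:constable1}, written as $(m-1)r_1r_n<r_1r_{n+2}+r_nr_{n+2}$, is equivalent to $r_n\bigl((m-1)r_1-r_{n+2}\bigr)<r_1r_{n+2}$, that is, to the second line of \eqref{eq3:prop}. Next, split the family \eqref{eq:conie-eq12} by the parity of $i$. For odd $i$ it reads $(m-1)r_1(r_{i-1}-mr_n)>-m(r_1+r_n)r_{n+2}$, equivalently $(m-1)r_1 r_{i-1}>-m\Delta$, which holds automatically because $\Delta>0$ and all rate constants are positive; so the odd instances are vacuous. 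For even $i$ it reads $(m-1)r_1(r_{i-1}+mr_n)>m(r_1+r_n)r_{n+2}$, equivalently $r_{i-1}>\dfrac{m\Delta}{(m-1)r_1}$, and $i-1$ runs through $3,5,\dots,n-2$. The instance $i-1=n-2$ is the left half of the fourth line of \eqref{eq3:prop}; the instances $i-1=3,5,\dots,n-4$ are implied by \eqref{eq:sec3-eq12} ($r_i>r_{n-2}$) together with that instance, and conversely recovered from it. Finally \eqref{eq:sec3-eq11} is the right half of the fourth line, \eqref{sec-eq0} is the third line, and \eqref{eq:conie-eq11} is the first line. Reading these equivalences in both directions yields the asserted set equality for $n>3$.

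For $n=3$ the same reduction is shorter: \eqref{eq:conie-eq12} reduces to its single odd instance $i=3$, which is vacuous once $\Delta>0$, and \eqref{eq:sec3-eq11}, \eqref{eq:sec3-eq12} do not occur, so the surviving inequalities simplify to \eqref{eq3:prop1}. Non-emptiness of the reduced region is then transparent in both cases; for \eqref{eq3:prop}, for instance, one chooses $r_1,r_{n+2}$ with $r_{n+2}<(m-1)r_1$, then $r_n>0$ small (so $\Delta$ is small and lines two and four are satisfiable), then $r_{n-1}>mr_n$, then $r_{n-2}$ in the interval $\bigl(\tfrac{m\Delta}{(m-1)r_1},\,r_1+r_{n+2}\bigr)$, and finally the remaining $r_i$ large enough.

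I expect the only real work to be organizational: tracking which members of the long list \eqref{sec-eq0}--\eqref{eq:constable1} are implied by which others, and in particular recognizing that the odd-$i$ instances of \eqref{eq:conie-eq12} collapse to triviality precisely because \eqref{eq:constable1} forces $\Delta>0$, while the even-$i$ instances all fold into the single bound $r_{n-2}>\frac{m\Delta}{(m-1)r_1}$ once \eqref{eq:sec3-eq12} is in hand. The one place a genuine computation seems unavoidable is checking that the interval $\bigl(\tfrac{m\Delta}{(m-1)r_1},\,r_1+r_{n+2}\bigr)$ for $r_{n-2}$ is non-empty for a suitable choice of the other constants, that is, that $m\Delta<(m-1)r_1(r_1+r_{n+2})$, which is immediate once $r_n$ is taken small.
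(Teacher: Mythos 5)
Your reduction for odd $n>3$ is essentially the paper's own argument, just written out in more detail: the paper likewise uses \eqref{eq:constable1} to rewrite \eqref{eq:conie-eq12} as $(m-1)r_1r_{i-1}>m\Delta$ (in your notation) for $i=4,\ldots,n-1$, discards the odd-$i$ instances as vacuous, collapses the even-$i$ instances to the single bound on $r_{n-2}$ via \eqref{eq:sec3-eq12}, and matches the remaining inequalities line by line with \eqref{eq3:prop}. That part of your proposal is correct and complete, and your non-emptiness discussion is consistent with the paper's Remark \ref{rmk:witness}.

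The $n=3$ case is where your argument has a genuine gap. After discarding the vacuous instance of \eqref{eq:conie-eq12}, the surviving system is \eqref{sec-eq0}, \eqref{eq:conie-eq11} \emph{and} \eqref{eq:constable1}; the last of these reads $(r_1+r_3)r_5>(m-1)r_1r_3$, which under \eqref{eq:conie-eq11} is the constraint $r_3<r_1r_5/\bigl((m-1)r_1-r_5\bigr)$ --- the exact analogue of the second line of \eqref{eq3:prop} --- and it is \emph{not} implied by the two inequalities in \eqref{eq3:prop1} (fix $r_1,r_3$ and let $r_5\to 0$: both inequalities of \eqref{eq3:prop1} can still hold while \eqref{eq:constable1} fails). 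So the step ``the surviving inequalities simplify to \eqref{eq3:prop1}'' is false as written; \eqref{eq3:prop1} is strictly larger than the set of hypotheses of Theorem \ref{thm:bistability} for $n=3$. To be fair, the paper's own proof dismisses this case as ``obvious'' and its displayed set \eqref{eq3:prop1} has the same omission, so you are reproducing a defect of the statement rather than introducing a new one; but a complete proof must either add the missing inequality to \eqref{eq3:prop1} or justify dropping it, and neither your write-up nor the paper does so.
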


\begin{remark}\label{rmk:witness}
By the inequalities in \eqref{eq3:prop1} and \eqref{eq3:prop},
one can easily choose a rate-constant vector such that the conditions of  Theorem \ref{thm:bistability} are satisfied.
In fact, for any integer $m\geq 2$, if $n=3$, then
for any fixed $r_1, r_3>0$, there alway exist $r_5, r_2>0$ such that the two inequalities in \eqref{eq3:prop1} are satisfied.
If $n>3$, notice that the inequalities in \eqref{eq3:prop} have a ``triangular" shape. More specifically,
first, for any fixed $r_1>0$, one can always choose $r_{n+2}>0$ such that  the first inequality is satisfied.
Second, for  the chosen $r_1, r_{n+2}>0$ in the first step,  one can find  $r_n>0$ such that  the second inequality is satisfied.
Third, for the chosen $r_n>0$ in the second step, one can find $r_{n-1}>0$ such that  the third inequality is satisfied.
Similarly, we can find $r_{n-2}$ and $r_i$ for $i=3,5,\ldots,n-4$ by the last two inequalities (notice that in the fourth  inequality, there exists $r_{n-2}>0$ between the two numbers
$\frac{m((r_1+r_n)r_{n+2}-(m-1)r_1r_n)}{(m-1)r_1}$ and $r_1+r_{n+2}$ because
$\frac{m((r_1+r_n)r_{n+2}-(m-1)r_1r_n)}{(m-1)r_1}<r_1+r_{n+2}$ is implied by the first inequality $r_{n+2}~<~(m-1)r_1$).
% Finally, we obtain the values of rate constants $r_1, \ldots, r_{2n-1}$ such that for any $r_{2n}>0$,  all inequalities  in \eqref{eq3:prop}  are satisfied.
%{\color{red}explain here why $\frac{m((r_1+r_n)r_{n+2}-(m-1)r_1r_n)}{(m-1)r_1}<r_1+r_{n+2}$ is obvious...}
Notice that $r_{2}, r_{4}, \ldots, r_{n-3}$ do not appear in the inequalities \eqref{eq3:prop}. We can choose any positive values for them.
 For instance, we give the following choices.

For $n=3$, we can choose $r_1=2, r_2=m+1, r_3=1$, and $r_5=m-1$ such that
the inequalities \eqref{sec-eq0}--\eqref{eq:conie-eq12} and \eqref{eq:constable1} in Theorem \ref{thm:bistability} are satisfied.

For any odd integer $n>3$, %the open region determined by inequalities \eqref{eq3:prop} in ${\mathbb R}^{2n}_{>0}$ is non-empty.
%For instance,
we can choose $r_1=2, r_2=r_4=\cdots=r_{n-3}=1, r_3=r_5=\cdots=r_{n-4}=m+1, r_{n-2}=m,  r_{n-1}=m+1, r_n=1$, and $r_{n+2}=m-1$ such that
the inequalities \eqref{sec-eq0}--\eqref{eq:constable1} in Theorem \ref{thm:bistability} are satisfied.
\end{remark}

%\subsection{Example}

%\begin{remark}
%For any $n\geq 3$, the rate constants $r_{n+1}$ and $r_{n+i}$ ($i=3, \ldots, n-1$) do not appear in those inequalities \eqref{eq:nondegenerate}--\eqref{eq:constable1}.
%\end{remark}
Based on Theorem \ref{thm:region} and the proofs of Theorems \ref{thm:mss} and \ref{thm:bistability} (in Section \ref{sec: proof}), we provide a procedure ({\bf Procedure Witness}) for computing a witness for bistability.
Notice that Step 1 in the procedure below can be carried out according to Remark \ref{rmk:witness}.
We give a more concrete example later for $m=6$ and $n=5$; see Example \ref{example}.

{\bf Procedure Witness.}
\underline{Input.} $m\geq2$, and odd $n\geq 3$;
\underline{Output.} $r_1, \ldots, r_{3n}>0$ such that $\widetilde{K}_{m,n}$ is bistable.\\
\underline{Step 1.}
For $n=3$, find values for $r_1, r_2, r_3, r_{5}>0$ by  \eqref{eq3:prop1} such that the inequalities \eqref{sec-eq0}--\eqref{eq:conie-eq12} and \eqref{eq:constable1}  are satisfied.
For $n>3$, find values for  $r_1, \ldots, r_n, r_{n+2}>0$ by  \eqref{eq3:prop} such that the inequalities \eqref{sec-eq0}--\eqref{eq:constable1} are satisfied. \\
\underline{Step 2.}
Let $r_{n+1}=r_{n+3}=\ldots=r_{2n}=\epsilon>0$.\\
\underline{Step 3.}
 Compute values for $r_{2n+1}, \ldots, r_{3n}$ by the equalities:
 \begin{equation}\label{eq:coneq}
\begin{cases}
r_{2n+1}&=~r_1+r_n+r_{n+1},\\
r_{2n+i}&=~r_{i-1}+r_i+r_{n+i},\quad\textrm{ for }\quad2\le i\le n-1,\\
r_{3n}&=~r_{n-1}-mr_n+r_{2n}.
\end{cases}
\end{equation}
 \\
 \underline{Step 4.}
 Compute steady states of $\widetilde{K}_{m,n}$ and check their stability (for instance, by {\tt Mathematica}). If  $\widetilde{K}_{m,n}$ is bistable, then output $r_1, \ldots, r_{3n}$.
 Otherwise, go back to Step 2, make $\epsilon$ smaller and repeat Steps 2--4 until $\widetilde{K}_{m,n}$ is bistable.

\begin{example}\label{example}
We give a concrete example of $\widetilde{K}_{6,5}$ with two locally
asymptotically stable steady states. %for $m=6$ and $n=5$.
Let $r_1=2$, $r_2=r_5=1$, $r_3=6$, $r_4=7$, $r_7=5$, $r_6=r_8=r_9=r_{10}=0.006$, $r_{11}=3.006$, $r_{12}=8$, $r_{13}=7.006$, $r_{14}=13.006$, and $r_{15}=1.006$. Here, the values of rate constants $r_1, \ldots, r_5$ and $r_7$ are chosen by the method described in  Remark \ref{rmk:witness}, which satisfy the inequalities \eqref{sec-eq0}--\eqref{eq:constable1}.
By the proof of Theorem \ref{thm:bistability} (see Section \ref{sec:proofbi}), the values for $r_6, r_8, r_9$ and $r_{10}$ are chosen to be the same small number $0.006$. After we choose these values for
$r_1, \ldots, r_{10}$,
the values of $r_{11}, \ldots, r_{15}$ are computed by the equalities \eqref{eq:coneq}.
The specialized system $f$ in \eqref{eq:oldsystem} is given by
\begin{equation*}%\label{ex}
\begin{cases}
f_1~=~-2x_1x_2-1.006x_1+3.006,\\
f_2~=~-2x_{1}x_{2}-x_{2}x_{3}-5x_{2}+8,\\
f_3~=~-x_{2}x_{3}-6x_{3}x_{4}-0.006x_{3}+7.006,\\
f_4~=~-6x_{3}x_{4}-7x_{4}x_{5}-0.006x_{4}+13.006,\\
f_5~=~-7x_{4}x_5+6x_{1}-0.006x_n+1.006.
\end{cases}
\end{equation*}
It can be verified by {\tt Maple} \cite{maple} that the above system $f=0$ has three positive solutions:
$$\hat x^{(1)}=(1,1,1,1,1),\quad\hat x^{(2)}\approx (1.69795,0.382186,12.5363,0.028445,54.5727)$$
and
$\hat x^{(3)}\approx (1.92826,0.276459,20.0808,0.0110718,150.601),$
where $\hat x^{(1)}$ and $\hat x^{(3)}$ are locally asymptotically stable.
Indeed, the Jacobian matrix at $\hat x^{(1)}$ has five negative eigenvalues, which are approximately
$$-19.7034, \;-9.28405, \;-6.17915,\; -2.78462,\; -0.07275,$$ and the Jacobian matrix at $\hat x^{(3)}$ has  five negative  eigenvalues, which are approximately
$$-1174.78,\; -29.2068,\; -1.49192,\; -0.151575,\; -0.00198971.$$

%Alternatively, choose $r_1=3$, $r_2=r_3=6$, $r_4=18$, $r_5=2$, $r_7=5$, $r_6=r_8=r_9=r_{10}=0.06$ and $r_{11}=5.06$, $r_{12}=14$, $r_{13}=12.06$, $r_{14}=24.06$, $r_{15}=6.06$ in $\widetilde{K}_{6,5}$, i.e.
%\begin{equation}\label{ex1}
%\begin{cases}
%f_1~=~-3x_1x_2-2.06x_1+5.06,\\
%f_2~=~-3x_{1}x_{2}-6x_{2}x_{3}-5x_{2}+14,\\
%f_3~=~-6x_{2}x_{3}-6x_{3}x_{4}-0.06x_{3}+12.06,\\
%f_4~=~-6x_{3}x_{4}-18x_{4}x_{5}-0.06x_{4}+24.06,\\
%f_5~=~-18x_{4}x_5+12x_{1}-0.06x_n+6.06.
%\end{cases}
%\end{equation}
%It can be computed that \eqref{ex1} has three nondegenerate positive steady states:
%$$\hat x^{(1)}=(1,1,1,1,1),\quad\hat x^{(2)}=(0.932124,1.12282,0.778704,1.44839,0.659961)$$
%and
%$$\hat x^{(3)}=(1.68739,0.312906,5.77995,0.0248477,51.8643),$$
%among which $\hat x^{(2)}$ and $\hat x^{(3)}$ are locally asymptotically stable.
\end{example}

\begin{remark}\label{rmk:condition}
In Theorem \ref{thm:bistability},
if we replace the inequality \eqref{eq:constable1} with its opposite
\begin{align}\label{eq:constable2}
(r_1+r_n)r_{n+2}~&<~(m-1)r_1r_n,
\end{align}
one can still prove (in a similar way with the proof of Theorem \ref{thm:bistability}) that $\widetilde{K}_{m,n}$ admits two locally asymptotically stable steady states, and one of the two stable steady states is close to $(\delta_1, \ldots, \delta_n)$ given in \eqref{eq:2solutions} (Section \ref{sec: proof}).
For instance, for any integer $m\geq 2$, when $n=3$, we can choose $r_1=3,r_2=3m,r_3=2,r_5=m-1$ such that
the inequalities  \eqref{sec-eq0}--\eqref{eq:conie-eq12} and \eqref{eq:constable2} are satisfied,  and when $n>3$ is odd, we can choose $r_1=3, r_2=r_4=\cdots=r_{n-3}=m, r_3=r_5=\cdots=r_{n-4}=m+1, r_{n-2}=m, r_{n-1}=3m,r_n=2,r_{n+2}=m-1$ such that
the inequalities  \eqref{sec-eq0}--\eqref{eq:sec3-eq12} and \eqref{eq:constable2} are satisfied. We give another example to illustrate this case; see Example \ref{ex:ex2}.
%\begin{align}
%(n>3)&\begin{cases}\label{con}
%(m-1)r_1-r_{n+2}>0,\\
%r_1+r_{n+2}-r_{n-2}>0,\\
%r_i-r_{n-2}>0,i=3,5,\ldots,n-4,\\
%(m-1)r_1r_i-m((m-1)r_1r_n-(r_1+r_n)r_{n+2})>0,i=2,4,\ldots,n-1,\\
%(m-1)r_1r_{n-2}+m((m-1)r_1r_n-(r_1+r_n)r_{n+2})>0,\\
%\end{cases}\\
%(n=3)&\begin{cases}\label{con1}
%(m-1)r_1-r_{5}>0,\\
%(m-1)r_1r_{2}-m((m-1)r_1r_3-(r_1+r_3)r_{5})>0.\\
%\end{cases}
%\end{align}
%As an instance, $r_1=2,r_2=r_4=\cdots=r_{n-1}=r_n=1,r_3=r_5=\cdots=r_{n-4}=m+1,r_{n-2}=m,r_{n+2}=m-1$ satisfy (\ref{con}) or (\ref{con1}).
\end{remark}
%}

\begin{example}\label{ex:ex2}
Again, we consider the network $\widetilde{K}_{6,5}$.
Let $r_1=3$, $r_2=r_3=6$, $r_4=18$, $r_5=2$, $r_7=5$, $r_6=r_8=r_9=r_{10}=0.06$, $r_{11}=5.06$, $r_{12}=14$, $r_{13}=12.06$, $r_{14}=24.06$, and $r_{15}=6.06$. This time, these rate constants satisfy
the inequalities  \eqref{sec-eq0}--\eqref{eq:sec3-eq12} and \eqref{eq:constable2}.%, and \eqref{eq:conie}.
%\begin{equation}\label{ex1}
%\begin{cases}
%f_1~=~-3x_1x_2-2.06x_1+5.06,\\
%f_2~=~-3x_{1}x_{2}-6x_{2}x_{3}-5x_{2}+14,\\
%f_3~=~-6x_{2}x_{3}-6x_{3}x_{4}-0.06x_{3}+12.06,\\
%f_4~=~-6x_{3}x_{4}-18x_{4}x_{5}-0.06x_{4}+24.06,\\
%f_5~=~-18x_{4}x_5+12x_{1}-0.06x_n+6.06.
%\end{cases}
%\end{equation}
It can be verified by {\bf Maple} that  there are three positive steady states:
$$\hat x^{(1)}=(1,1,1,1,1),\quad\hat x^{(2)}\approx (0.932124,1.12282,0.778704,1.44839,0.659961)$$
and
$\hat x^{(3)}\approx (1.68739,0.312906,5.77995,0.0248477,51.8643),$
where $\hat x^{(2)}$ and $\hat x^{(3)}$ are locally asymptotically stable. Indeed, the Jacobian matrix at $\hat x^{(2)}$ has five negative eigenvalues, which are approximately $$-40.2232, \;-20.8642, \;-7.79735,\; -7.20777,\; -0.0343658,$$ and the Jacobian matrix at $\hat x^{(3)}$ has  five negative  eigenvalues, which are approximately $$-968.734, \;-46.3232,\; -2.9517, \;-0.5785,\; -0.0443525.$$
\end{example}

\begin{remark}
Numerical experiments show that
if the conditions of Theorem \ref{thm:mss} are not satisfied, then it is possible for $\widetilde{K}_{m,n}$ to admit either one or three nondegenerate steady states.
We have never seen more than three nondegenerate steady states.
We always observe
bistability whenever three nondegenerate steady states are found.
So,  we propose Conjecture \ref{conj} below.
\end{remark}
\begin{conjecture}\label{conj}
For any integer $m\geq 2$,  and for any  integer $n\ge3$, the
maximum number of nondegerate steady states of $\widetilde{K}_{m,n}$  is three, and the network $\widetilde{K}_{m,n}$ is multistationary if and only if it is bistable.
\end{conjecture}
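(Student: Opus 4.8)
The plan is to treat Conjecture~\ref{conj} as two separate statements.

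The biconditional ``$\widetilde{K}_{m,n}$ multistationary $\iff$ $\widetilde{K}_{m,n}$ bistable'' should follow with essentially no new work. One implication is immediate, since two locally asymptotically stable positive steady states are in particular two positive steady states. For the converse I would invoke \cite[Theorem~6.4]{mss-review}: for $m\ge 2$ the network $\widetilde{K}_{m,n}$ is multistationary precisely when $n$ is odd, and Theorem~\ref{thm:bistability} of the present paper exhibits, for every $m\ge 2$ and every odd $n\ge 3$, rate constants for which $\widetilde{K}_{m,n}$ is bistable; hence for $m\ge 2$, $n\ge 3$ the properties ``multistationary'', ``$n$ odd'' and ``bistable'' coincide (for even $n$ both sides are false and the biconditional is vacuous). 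So the real content is the first claim, the uniform upper bound of three on the number of nondegenerate positive steady states. Theorem~\ref{thm:mss} already shows the bound is attained for odd $n$, so what remains is to prove that no rate-constant vector yields four or more.

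For the upper bound I would fix $r\in\mathbb R^{3n}_{>0}$ and carry out a univariate reduction exploiting the cascade shape of \eqref{eq:oldsystem}: from $f_1=0$ one gets $r_1x_1x_2=r_{2n+1}-(r_n+r_{n+1})x_1$, so $x_2$ is a rational function of $x_1$; then for $i=2,\dots,n-1$ the equation $f_i=0$ expresses $x_{i+1}$ rationally in $x_{i-1},x_i$, hence in $x_1$; substituting into $f_n=0$ and clearing denominators yields a single polynomial $P(x_1)$ whose roots in the open set where $x_2(x_1),\dots,x_n(x_1)$ are all positive are exactly the positive steady states, with ``nondegenerate'' corresponding to ``simple root'' (recall $N$ is full rank, so nondegeneracy of $x^*$ is just $\det J|_{x^*}\ne 0$). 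A useful auxiliary observation is that $g:=\sum_{i=1}^n(-1)^{i-1}f_i$ is \emph{affine} in $x_1,\dots,x_n$, because the coefficient of each quadratic monomial $x_jx_{j+1}$ in $g$ is proportional to $(-1)^{j-1}+(-1)^{j}=0$; using $g=0$ to eliminate one variable by a linear substitution should help control $\deg P$.

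The hard part will be bounding the number of positive roots of $P$ by three \emph{uniformly in $n$}. For $n=3$ the reduction yields a cubic and there is nothing to do, but $\deg P$ grows with $n$, so a direct appeal to Descartes' rule of signs gives a bound that also grows with $n$. To reach three I would look for extra structure: an induction on $n$ tracking how the number of sign variations of the coefficient sequence of $P$ changes as $x_n,x_{n-1},\dots$ are eliminated one at a time (the linear relation $g=0$ and the thin Newton polytopes of the $f_i$ should keep this bookkeeping manageable); or a monotonicity/convexity argument rewriting the reduced scalar equation as $G(x_1)=H(x_1)$ with $G,H$ each of ``at most one inflection'' type; or a fewnomial/polyhedral ``at most $k$ positive solutions'' estimate specialized to sequestration networks. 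Any such estimate would be strengthened by the parity result from Brouwer-degree arguments (\cite[Theorem~1]{CFMW}, \cite[Theorem~3.12]{DMST2019}): in the multistationary case the suitably counted number of positive steady states is odd, so even a crude bound of four would already force the value three. I expect this uniform-in-$n$ control --- rather than the reduction itself or the nondegeneracy bookkeeping --- to be the genuine obstacle, which is presumably why the statement is still only a conjecture.
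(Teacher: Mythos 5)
This statement is Conjecture~\ref{conj}, which the paper explicitly leaves \emph{open}: it is motivated only by the numerical experiments described in the preceding remark, and no proof is given, so there is nothing in the paper to compare your argument against. Judged on its own terms, your proposal has a genuine gap that you yourself flag: the uniform upper bound of three nondegenerate positive steady states is never established. The univariate reduction is sound as far as it goes --- the cascade structure of \eqref{eq:oldsystem} does let you express $x_2,\dots,x_n$ rationally in $x_1$, and the alternating sum $\sum_{i=1}^n(-1)^{i-1}f_i$ is indeed affine (the paper exploits exactly this identity in the proof of Lemma~\ref{lm}) --- but everything after the reduction is a list of candidate strategies (Descartes-type sign bookkeeping, fewnomial estimates, a parity shortcut), none of which is carried out, and since $\deg P$ grows with $n$ no bound independent of $n$ is obtained. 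Even the parity shortcut is not free: the Brouwer-degree results you cite come with dissipativity/properness hypotheses and require nondegeneracy of \emph{all} steady states before they yield an odd count, and none of that is verified for $\widetilde{K}_{m,n}$. So the substantive half of the conjecture remains exactly as open as the paper leaves it.

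On the biconditional, your observation is correct under the paper's literal, network-level Definition~\ref{def:mss}: ``multistationary'' and ``bistable'' are both existential statements over the rate constants, so \cite[Theorem 6.4]{mss-review} together with Theorem~\ref{thm:bistability} already forces the equivalence for all $m\ge 2$ and $n\ge 3$ (both properties holding precisely when $n$ is odd, and both failing for even $n$). Be aware, however, that the remark preceding the conjecture (``we always observe bistability whenever three nondegenerate steady states are found'') strongly suggests the authors intend the pointwise statement --- that every rate-constant vector witnessing multistationarity also witnesses bistability --- and that stronger version does not follow from anything in the paper or in your sketch. In short: you dispatch the easy half correctly modulo this ambiguity of reading, but the core claim, the maximum of three, is not proved.
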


\section{Proofs of main results}\label{sec: proof}
The goal of this section is to prove Theorem \ref{thm:mss}, Theorem \ref{thm:bistability} and Theorem \ref{thm:region}.
Our first step is to apply the specializations of parameters \eqref{eq:coneq}\footnote{These specializations are inspired by the proof of \cite[Theorem 4.5]{FSW}.} to the network.
Substituting \eqref{eq:coneq} into the system $f$ \eqref{eq:oldsystem}, the system can be rewritten as
\begin{equation}\label{eq:system}
\begin{cases}
%\frac{dx_1}{dt}
f_1=-r_1x_1x_2-r_nx_1-r_{n+1}x_1+r_1+r_n+r_{n+1},   \\
%\frac{dx_i}{dt}
f_i\,=-r_{i-1}x_{i-1}x_{i}-r_{i}x_{i}x_{i+1}-r_{n+i}x_{i}+r_{i-1}+r_i+r_{n+i},
\textrm{ for }2\le i\le n-1,  \\
%\frac{dx_n}{dt}
f_n=-r_{n-1}x_{n-1}x_n+mr_{n}x_{1}-r_{2n}x_n+r_{n-1}-mr_n+r_{2n}.
\end{cases}
\end{equation}
Note that by the equalities \eqref{eq:coneq}, $x^{(1)}:=(1, \ldots, 1)$ is always a positive solution to the system \eqref{eq:system}.
Note also that this substitution does not change the Jacobian matrix of $f$ with respect to $x$ since $r_{2n+1}, \ldots, r_{3n}$ are constant terms in \eqref{eq:oldsystem}.

 Under the equalities \eqref{eq:coneq}, we only need to find rate constants $r_1, \ldots, r_{2n}>0$ such that
the system $f=0$ in \eqref{eq:system} has three distinct simple positive solutions. Then by
\eqref{eq:coneq}, we can find positive values for rate constants $r_{2n+1}, \ldots, r_{3n}$.
Remark that in order to ensure $r_{3n}>0$, we need to require
$r_{n-1}+r_{2n}-mr_n>0$. Here, we require a stronger condition
\[r_{n-1}>mr_n\; \;\;\;\;\text{(i.e.,
the inequality \eqref{sec-eq0} in Theorem \ref{thm:mss})}.\]
In fact, if we have $r_{n-1}>mr_n$, then for any $r_{2n}>0$, we can make it sure $r_{3n}>0$.
We make this stronger requirement on
$r_{n-1}$ and $r_n$
because we need
more flexibility on $r_{2n}$ later when we prove Theorem \ref{thm:mss}.
%\begin{equation}\label{sec-eq0}
%r_{n-1}+r_{2n}~>~mr_n.
%\end{equation}

\subsection{Nondegenerate multistationarity}\label{sec:mss}
%From now on, we always assume
%\begin{align*}
%r_{2n+1}&=r_1+r_n+r_{n+1}\\
%r_{2n+i}&=r_{i-1}+r_i+r_{n+i}\quad\textrm{ for }2\le i\le n-1\\
%r_{3n}&=r_{n-1}+r_{2n}-mr_n
%\end{align*}
%to ensure $(1,1,\ldots,1)$ is a steady state of $\widetilde{K}_{m,n}$.
In this subsection, we prove Theorem \ref{thm:mss}. We give an outline of the proof below.

%\begin{itemize}
\underline{First}, we consider a simpler network.  For $i\neq 2$, we remove the inflow reactions $X_i\rightarrow 0$ from $\widetilde{K}_{m,n}$ and obtain a subnetwork. % and denote the resulting network by $K^0_{m,n}$.
Notice that for the subnetwork, we have $r_{n+1}=r_{n+3}=\cdots=r_{2n}=0$ in $f$ \eqref{eq:system} on the right-hand side of mass-action ODEs.
We  show  in Lemma \ref{lm} that for this special choice of rate constants,  the system $f=0$ has two
nondegenerate positive solutions under conditions \eqref{eq:nondegenerate} and \eqref{eq:conie-eq11}--\eqref{eq:conie-eq12}. In order to prove Lemma \ref{lm}, we need two results from linear algebra; see Lemmas \ref{sec4-lm1} and \ref{lm:J0}.

%For the later use, we prove a lemma on the determinate of a tridiagonal matrix.
\underline{Second}, for $r_{n+1}=r_{n+3}=\cdots=r_{2n}=0$, besides two solutions $x^{(1)}$ and $x^{(2)}$ shown in Lemma \ref{lm},  the system $f=0$ \eqref{eq:system} has a ``special" solution $x^{(3)}$  with
its last coordinate $x_{n}=+\infty$.
We make this third solution  ``visible" by applying a variable substitution to the system $f$ (see  \eqref{eq:defp}--\eqref{eq:defg}). Equivalently, we show the resulting system $g$ in \eqref{eq:defg} has a nondegenerate positive solution (under the condition \eqref{eq:conie-eq11} for $n=3$, or the conditions \eqref{eq:conie-eq12}--\eqref{eq:sec3-eq12} for $n>3$), which gives the third solution $x^{(3)}$  to the original system $f=0$;
see Lemma \ref{lm:thirdn3} for $n=3$ and Lemma \ref{lm:third} for $n>3$.

\underline{Finally}, we set $r_{n+1}=r_{n+3}=\cdots=r_{2n}=\epsilon$. By the previous steps and the implicit function theorem, we  show that
$f=0$ has three nondegenerate positive solutions if $\epsilon$ is a sufficiently small positive number; see Lemma \ref{lm:jac2} and the proof of Theorem \ref{thm:mss}.
%\end{itemize}

\begin{lemma}\label{sec4-lm1}
For any $n\geq 3$, the determinant of the tridiagonal matrix
\begin{equation*}
\begin{bmatrix}
a_1+b_1&a_2&&&\\
b_1&a_2+b_2&a_3&&\\
&\ddots&\ddots&\ddots&\\
&&b_{n-2}&a_{n-1}+b_{n-1}&a_n\\
&&&b_{n-1}&a_n
\end{bmatrix}
\end{equation*}
is equal to $a_1a_2\ldots a_n$.
\end{lemma}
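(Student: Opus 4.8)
The plan is to prove the identity by induction on $n$, expanding along the first column and checking that the two surviving minors are again matrices of exactly the same tridiagonal shape but of smaller size, so that the induction hypothesis applies. The base case $n=2$ is the direct computation $\det\begin{bmatrix} a_1+b_1 & a_2 \\ b_1 & a_2\end{bmatrix} = (a_1+b_1)a_2 - b_1 a_2 = a_1 a_2$.

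For the inductive step, let $M_n$ denote the displayed $n\times n$ matrix. Its first column has only two nonzero entries, $a_1+b_1$ in row $1$ and $b_1$ in row $2$, so a cofactor expansion along that column gives $\det M_n = (a_1+b_1)\,\mu_1 - b_1\,\mu_2$, where $\mu_1$ is the minor obtained by deleting row $1$ and column $1$, and $\mu_2$ the minor obtained by deleting row $2$ and column $1$. Deleting the first row and column of $M_n$ leaves precisely the matrix of the same shape of size $n-1$ with parameters $a_2,\ldots,a_n$, $b_2,\ldots,b_{n-1}$ (note in particular that the bottom-right entry is still a bare $a_n$, and that each summand $b_i$ still sits on the $i$-th diagonal entry), so $\mu_1 = a_2 a_3\cdots a_n$ by the induction hypothesis. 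For $\mu_2$, after deleting row $2$ and column $1$ the new first row is $(a_2,0,\ldots,0)$; expanding along it and then invoking the induction hypothesis on the remaining size-$(n-2)$ matrix of the same shape (parameters $a_3,\ldots,a_n$, $b_3,\ldots,b_{n-1}$) gives $\mu_2 = a_2\cdot(a_3\cdots a_n)$. Substituting, $\det M_n = (a_1+b_1)a_2\cdots a_n - b_1 a_2\cdots a_n = a_1 a_2\cdots a_n$, which closes the induction.

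I do not expect a genuine obstacle here; the one place that needs care --- and the only thing that could break the argument --- is the verification that the minor $\mu_1$ and the size-$(n-2)$ block inside $\mu_2$ really have the same structure as the original matrix, in particular that the special terminal entry $a_n$ (rather than $a_n+b_n$) is preserved under the deletions. Everything else is a bookkeeping of signs and indices in a cofactor expansion. As an alternative route that sidesteps the minor chase, one can set $D_k$ to be the $k$-th leading principal minor of $M_n$, expand along the last row to obtain the three-term recurrence $D_k = (a_k+b_k)D_{k-1} - a_k b_{k-1}D_{k-2}$ for $k\le n-1$ together with $\det M_n = a_n(D_{n-1}-b_{n-1}D_{n-2})$ for the final step, check by induction that $D_k = \sum_{j=0}^{k}(a_1\cdots a_j)(b_{j+1}\cdots b_k)$, and observe that the telescoping $D_{n-1}-b_{n-1}D_{n-2} = a_1\cdots a_{n-1}$ yields the claim; this is in effect reading off the $LU$-factorization of $M_n$ one diagonal at a time.
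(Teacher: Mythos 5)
Your induction is correct: the cofactor expansion along the first column is valid (the column has only the two nonzero entries you name, with the right signs), both minors do have the required shape with the terminal bare $a_n$ preserved, and the base cases work out (note that for $n=3$ the inner block in $\mu_2$ is the $1\times1$ matrix $[a_3]$, so the induction really bottoms out at size $1$; this is harmless but worth saying). Your route differs from the paper's, which avoids induction altogether: the authors simply subtract each row from the row above it, working from the bottom row upward. Since row $i+1$ is $(\dots,0,b_i,a_{i+1},0,\dots)$ and row $i$ is $(\dots,b_{i-1},a_i+b_i,a_{i+1},0,\dots)$, each such subtraction cancels the superdiagonal entry $a_{i+1}$ and strips the $b_i$ from the diagonal, so after one sweep the matrix becomes triangular with diagonal $a_1,\dots,a_n$ and the determinant is read off directly. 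That argument is shorter and requires no bookkeeping of minors; what your approach buys is robustness (cofactor expansion plus induction is a mechanical template), and your alternative continuant recurrence $D_k=(a_k+b_k)D_{k-1}-a_kb_{k-1}D_{k-2}$ with the closed form $D_k=\sum_{j=0}^{k}(a_1\cdots a_j)(b_{j+1}\cdots b_k)$ actually proves more than is needed, giving every leading principal minor rather than just the full determinant. Either of your arguments would serve as a correct replacement for the paper's proof.
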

\begin{proof}
We transform the matrix into an upper triangular matrix by
applying  the Gaussian elimination  starting from the last row to the first row:
\begin{equation*}
\begin{bmatrix}
a_1+b_1&a_2&&&\\
b_1&a_2+b_2&a_3&&\\
&\ddots&\ddots&\ddots&\\
&&b_{n-2}&a_{n-1}+b_{n-1}&a_n\\
&&&b_{n-1}&a_n
\end{bmatrix}
\end{equation*}
\begin{equation*}
\longrightarrow\begin{bmatrix}
a_1+b_1&a_2&&&\\
b_1&a_2+b_2&a_3&&\\
&\ddots&\ddots&\ddots&\\
&&b_{n-2}&a_{n-1}&0\\
&&&b_{n-1}&a_n
\end{bmatrix}
\end{equation*}
\begin{equation*}
\longrightarrow\cdots\longrightarrow\begin{bmatrix}
a_1&0&&&\\
b_1&a_2&0&&\\
&\ddots&\ddots&\ddots&\\
&&b_{n-2}&a_{n-1}&0\\
&&&b_{n-1}&a_n
\end{bmatrix}.
\end{equation*}
Thus, the determinant is $a_1a_2\ldots a_n$.
\end{proof}

\begin{lemma}\label{lm:J0}
For any  integer $m\geq 2$, and for any odd integer $n\geq 3$, %$K^{0}_{m,n}$ admits two nondegenerate steady states.
if
\begin{equation}\label{eq:con0}
r_{n+1}=0\; \text{and}\; \;r_{n+i}=0, \;\; \text{for }\; 3\leq i \leq n,
\end{equation}
then the determinant of $J$ in \eqref{eq:jac} is
\begin{align*}
r_2\cdots r_{n-1} x_2\cdots x_{n-1}((m-1)r_1r_nx_1-(r_n+r_1x_2)r_{n+2}).
\end{align*}
\end{lemma}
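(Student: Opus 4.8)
The plan is to substitute the vanishing conditions \eqref{eq:con0} directly into the Jacobian matrix $J$ in \eqref{eq:jac} and then compute the determinant by exploiting the near-tridiagonal structure of the resulting matrix, using Lemma~\ref{sec4-lm1} as the main workhorse. Under \eqref{eq:con0}, the only surviving outflow constant in the diagonal is $r_{n+2}$ (appearing in the $(2,2)$-entry), so the matrix becomes tridiagonal except for the single corner entry $mr_n$ in position $(n,1)$. First I would expand the determinant along the first column, which has just two nonzero entries: $J_{11} = -r_1x_2 - r_n$ and $J_{n1} = mr_n$. This produces two terms: $(-r_1x_2-r_n)$ times the determinant of a tridiagonal $(n-1)\times(n-1)$ minor, plus $\pm mr_n$ times the determinant of the minor obtained by deleting row $n$ and column $1$, which is \emph{upper} triangular (it has the $a_i = -r_ix_{i+1}$ type entries on the superdiagonal after the column shift and the diagonal entries $-r_{i-1}x_{i-1}$ below), so its determinant is an explicit product.

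The key step is recognizing that the $(n-1)\times(n-1)$ tridiagonal minor obtained by deleting the first row and first column matches exactly the template of Lemma~\ref{sec4-lm1} after identifying $a_i = -r_i x_{i+1}$ for the relevant range, $b_i = -r_i x_{i+1}$ (or the transposed role), so that each diagonal entry $-r_{i-1}x_{i-1} - r_i x_{i+1} - r_{n+i}$ with $r_{n+i}=0$ splits as $a_i + b_i$ in the required way, and the last diagonal entry $-r_{n-1}x_{n-1} - r_{2n}$ with $r_{2n}=0$ becomes just $a_n = -r_{n-1}x_{n-1}$. Lemma~\ref{sec4-lm1} then gives this minor's determinant as $\pm r_2\cdots r_{n-1} x_2\cdots x_n$ up to a careful sign and one edge case at the top (the $(2,2)$-entry carries the extra $-r_1x_1 - r_{n+2}$, so the splitting there contributes the $r_{n+2}$ term rather than fitting the pure pattern). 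I would handle the top corner separately: the $(2,2)$-entry equals $-r_1x_1 - r_2x_3 - r_{n+2}$, so I would write it as $(-r_1x_1 - r_{n+2}) + (-r_2 x_3)$ and apply the lemma with $a_1$-slot absorbing both the genuine $a_1$ piece and the $r_{n+2}$ correction, tracking how $r_{n+2}$ propagates into the final product.

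The main obstacle will be bookkeeping: getting the signs right across the cofactor expansion (the $(n,1)$ cofactor carries sign $(-1)^{n+1}$, and since $n$ is odd this is $+1$, which is presumably why the oddness hypothesis is invoked), and correctly identifying which entries play the roles of $a_i$ versus $b_i$ in Lemma~\ref{sec4-lm1}, since the off-diagonal entries $-r_ix_{i+1}$ and $-r_ix_{i+1}$ of $J$ are not literally symmetric in appearance (one is $-r_{i}x_i$ read one way, $-r_i x_{i+1}$ the other) — I would write out $J$ explicitly for small $n$ (say $n=3$ and $n=5$) to fix the pattern, then assemble the general argument. After combining the two cofactor terms, the $(-r_1x_2-r_n)$ term contributes $-(r_1x_2+r_n)r_{n+2}\cdot r_2\cdots r_{n-1}x_2\cdots x_{n-1}$ (times $x_n$ factors that must cancel against the denominator, or more precisely the tridiagonal product telescopes to leave exactly $x_2\cdots x_{n-1}$) and the $mr_n$ term contributes $m r_1 r_n x_1 \cdot r_2\cdots r_{n-1} x_2 \cdots x_{n-1}$; factoring out the common $r_2\cdots r_{n-1}x_2\cdots x_{n-1}$ yields $(m-1)r_1r_nx_1 - (r_n + r_1x_2)r_{n+2}$ as claimed. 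I expect the whole argument to be a one-page computation once the structure is pinned down.
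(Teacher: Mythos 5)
There is a concrete error at the very first step of your plan. The first column of $J$ does \emph{not} have just two nonzero entries: besides $J_{11}=-r_1x_2-r_n$ and $J_{n1}=mr_n$, it contains $J_{21}=\partial f_2/\partial x_1=-r_1x_2$, which does not vanish under \eqref{eq:con0} (only the diagonal outflow terms $r_{n+1}, r_{n+3},\ldots,r_{2n}$ are set to zero; the off-diagonal entries are untouched). The cofactor you omit is not negligible: deleting row $2$ and column $1$ leaves a matrix whose first row is $(-r_1x_1,0,\ldots,0)$ sitting on top of a tridiagonal block, and its determinant works out to $\pm r_1x_1\,r_2x_2\cdots r_{n-1}x_{n-1}$. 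Its contribution $r_1x_2\cdot r_1x_1\cdot r_2x_2\cdots r_{n-1}x_{n-1}$ is exactly what cancels the $-r_1^2x_1x_2\cdot r_2x_2\cdots r_{n-1}x_{n-1}$ term hidden inside the $(1,1)$-cofactor product $-(r_1x_2+r_n)(r_1x_1+r_{n+2})\,r_2x_2\cdots r_{n-1}x_{n-1}$. Your final bookkeeping quietly drops both of these pieces, so the stated cancellation to $(m-1)r_1r_nx_1-(r_n+r_1x_2)r_{n+2}$ is asserted rather than derived; carried out literally, your expansion leaves a spurious $-r_1^2x_1x_2$ term. (You can see this already at $n=3$.)

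The fix is either to include the third cofactor, or to do what the paper does: expand along the first \emph{row}, which genuinely has only two nonzero entries $(-r_1x_2-r_n,\,-r_1x_1,\,0,\ldots,0)$. The $(1,1)$-minor is tridiagonal and Lemma~\ref{sec4-lm1} gives $r_2x_2\cdots r_{n-1}x_{n-1}(r_1x_1+r_{n+2})$; the $(1,2)$-minor is then expanded along \emph{its} first column (two nonzero entries, $-r_1x_2$ and $mr_n$), with one resulting minor handled by Lemma~\ref{sec4-lm1} and the other triangular. Your other ingredients --- splitting the $(2,2)$-entry as $(-r_1x_1-r_{n+2})+(-r_2x_3)$ to fit the $a_i+b_i$ template, and using oddness of $n$ to pin the sign of the $mr_n$ term --- are sound, though the minor obtained by deleting row $n$ and column $1$ is lower, not upper, triangular. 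As written, however, the proposal has a genuine gap.
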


\begin{proof}
%If $r_{n+1}=r_{n+3}=\cdots=r_{2n}=0$, then
%$J$ becomes
%\begin{equation}
%J=
%{\fontsize{3pt}{2pt}
%\begin{bmatrix}
%-r_1x_2-r_n&-r_1x_1&\cdots&0&0\\
%-r_1x_2&-r_1x_1-r_2x_3-r_{n+2}&\cdots&\vdots&\vdots\\
%0&-r_2x_3&\ddots&0&0\\
%\vdots&0&\ddots&-r_{n-2}x_{n-2}&0\\
%0&\vdots&\ddots&-r_{n-2}x_{n-2}-r_{n-1}x_n&-r_{n-1}x_{n-1}\\
%mr_n&0&\cdots&-r_{n-1}x_n&-r_{n-1}x_{n-1}
%\end{bmatrix}.
%}
%\end{equation}
%To compute the determinate of $J$,
We expand $\det J|_{r_{n+1}=0\; \text{and}\; r_{n+i}=0, \;3\leq i \leq n}$ with respect to the first row and obtain $\det J=-(r_1x_2+r_n)\det J_1+r_1x_1\det J_2$, where
\begin{equation*}
J_1~=~
{\fontsize{3pt}{2pt}
\begin{bmatrix}
-r_1x_1-r_2x_3-r_{n+2}&\cdots&0&0\\
-r_2x_3&\ddots&\vdots&\vdots\\
0&\ddots&-r_{n-2}x_{n-2}&0\\
\vdots&\ddots&-r_{n-2}x_{n-2}-r_{n-1}x_n&-r_{n-1}x_{n-1}\\
0&\cdots&-r_{n-1}x_n&-r_{n-1}x_{n-1}
\end{bmatrix}
}
\end{equation*}
and
\begin{equation*}
J_2~=~
{\fontsize{3pt}{2pt}
\begin{bmatrix}
-r_1x_2&-r_2x_2&\cdots&0\\
0&-r_2x_2-r_3x_4&\ddots&\vdots\\
\vdots&-r_3x_4&\ddots&0\\
0&\ddots&-r_{n-2}x_{n-2}-r_{n-1}x_n&-r_{n-1}x_{n-1}\\
mr_n&\cdots&-r_{n-1}x_n&-r_{n-1}x_{n-1}
\end{bmatrix}.
}
\end{equation*}
By Lemma \ref{sec4-lm1}, $\det J_1=r_2x_2\cdots r_{n-1}x_{n-1}(r_1x_1+r_{n+2})$. Again, we expand $\det J_2$ with respect to the first column: $\det J_2=-r_1x_2\det J_3-mr_n\det J_4$, where
\begin{equation*}
J_3=
{\fontsize{3pt}{2pt}
\begin{bmatrix}
-r_2x_2-r_3x_4&-r_3x_3&\cdots&0\\
-r_3x_4&-r_3x_3-r_4x_5&\ddots&\vdots\\
0&\ddots&\ddots&0\\
\vdots&\ddots&-r_{n-2}x_{n-2}-r_{n-1}x_n&-r_{n-1}x_{n-1}\\
0&\cdots&-r_{n-1}x_n&-r_{n-1}x_{n-1}
\end{bmatrix}
}
\end{equation*}
and
\begin{equation*}
J_4=
{\fontsize{3pt}{2pt}
\begin{bmatrix}
-r_2x_2&0&\cdots&0\\
-r_2x_2-r_3x_4&-r_3x_3&\ddots&\vdots\\
-r_3x_4&\ddots&0&0\\
\vdots&\ddots&-r_{n-2}x_{n-2}&0\\
0&\cdots&-r_{n-2}x_{n-2}-r_{n-1}x_n&-r_{n-1}x_{n-1}
\end{bmatrix}
}.
\end{equation*}
By Lemma \ref{sec4-lm1},  $\det J_3=-r_2x_2\cdots r_{n-1}x_{n-1}$. Clearly, $\det J_4=-r_2x_2\cdots r_{n-1}x_{n-1}$. Thus
\begin{align*}
\det J&=-(r_1x_2+r_n)r_2x_2\ldots r_{n-1}x_{n-1}(r_1x_1+r_{n+2})\\
&\quad\,+r_1x_1(r_1x_2+mr_n)r_2x_2\ldots r_{n-1}x_{n-1}\\
&=r_2\cdots r_{n-1} x_2\cdots x_{n-1}((m-1)r_1r_nx_1-(r_n+r_1x_2)r_{n+2}).
\end{align*}
\end{proof}

%First, we consider the simplified system with $r_{n+1}=r_{n+3}=\cdots=r_{2n}=0$ in $\widetilde{K}_{m,n}$ which is denoted by $K^0_{m,n}$.
\begin{lemma}\label{lm}
For any  integer $m\geq 2$,  and for any odd integer $n\geq 3$,  %$K^{0}_{m,n}$ admits two nondegenerate steady states.
if the  rate constants $r_{n+1}, r_{n+3}, \ldots, r_{2n}$ satisfy the condition \eqref{eq:con0},
%\begin{equation}\label{eq:con0}
%r_{n+1}=0, \; \;r_{n+i}=0, \;\; \text{for }\; 3\leq i \leq n,
%\end{equation}
 and if the positive rate constants $r_{1}, \ldots, r_{n}, r_{n+2}$  satisfy the inequalities \eqref{eq:nondegenerate} and \eqref{eq:conie-eq11}--\eqref{eq:conie-eq12},
then the system $f$ in \eqref{eq:system} has two distinct positive solutions
\begin{align}\label{eq:2solutions}
x^{(1)}=(1,1,\ldots,1)  \;\;\;
\text{and} \;\;\;  x^{(2)}=(\delta_1,\delta_2,\ldots,\delta_n),
\end{align}
 %$\det J|_{x=x^{(i)}}\neq 0$ $(i=1, 2)$,
where
$$\delta_1~:=~\frac{(r_1+r_n)r_{n+2}}{(m-1)r_1r_n},\;\;\;\delta_2~:=~\frac{((m-1)r_1-r_{n+2})r_n}{r_1r_{n+2}},$$
and
$$\delta_i~:=~\frac{(m-1)r_1(r_{i-1}+(-1)^{i}mr_{n})+(-1)^{i-1}m(r_1+r_{n})r_{n+2}}{(m-1)r_1r_{i-1}\delta_{i-1}},\quad i=3,\ldots,n,$$
and the Jacobian matrix $J$ in \eqref{eq:jac} has full rank at both
solutions.
\end{lemma}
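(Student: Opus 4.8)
The plan is to verify directly that the two claimed vectors solve $f=0$ and then invoke Lemma~\ref{lm:J0} to establish nondegeneracy at both points. First I would dispose of $x^{(1)}=(1,\ldots,1)$: under the specialization \eqref{eq:coneq} this is true by construction (the constant terms of each $f_i$ were chosen precisely so that substituting all $x_j=1$ annihilates the polynomial), and the condition \eqref{eq:con0} is just the instance of \eqref{eq:coneq} with $\epsilon=0$, so nothing new is needed here.

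The substantive part is checking $x^{(2)}=(\delta_1,\ldots,\delta_n)$. With $r_{n+1}=r_{n+3}=\cdots=r_{2n}=0$ the equations become $f_1=-r_1x_1x_2-r_nx_1+r_1+r_n=0$, $f_i=-r_{i-1}x_{i-1}x_i-r_ix_ix_{i+1}+r_{i-1}+r_i=0$ for $2\le i\le n-1$, and $f_n=-r_{n-1}x_{n-1}x_n+mr_nx_1+r_{n-1}-mr_n=0$. I would solve these recursively: $f_1=0$ is linear in $x_2$ once $x_1$ is fixed, giving $x_2$ in terms of $x_1$; then each $f_i=0$ for $2\le i\le n-1$ is linear in $x_{i+1}$ once $x_{i-1},x_i$ are known, giving the stated recursion $\delta_i=\frac{r_{i-1}+r_i-r_{i-1}\delta_{i-1}\delta_i\cdot(\cdots)}{\cdots}$—more precisely one rearranges $r_ix_ix_{i+1}=r_{i-1}+r_i-r_{i-1}x_{i-1}x_i$ to get $x_{i+1}$. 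The real computation is to guess/verify the right value of $x_1=\delta_1$; I would take $\delta_1=\frac{(r_1+r_n)r_{n+2}}{(m-1)r_1r_n}$ as given, compute $\delta_2$ from $f_1=0$, then show by induction that the closed form for $\delta_i$ stated in the lemma is exactly what the recursion produces, and finally check that this choice makes $f_n=0$ hold as well. The last check is the one genuinely constraining equation (the system has one more equation than the ``triangular'' solving process uses), and I expect it to reduce, after clearing denominators, to a polynomial identity in $r_1,\ldots,r_n,r_{n+2}$ that holds identically—this verification, while routine, is where the bookkeeping with the alternating signs $(-1)^i$ is delicate and is the main obstacle. Positivity of each $\delta_i$ is then read off from the inequalities: \eqref{eq:conie-eq11} gives $\delta_2>0$, \eqref{eq:conie-eq12} gives the numerators of $\delta_3,\ldots,\delta_n$ the correct sign, and an induction shows the denominators $r_{i-1}\delta_{i-1}$ stay positive, so $\delta_i>0$ throughout; one also notes $x^{(1)}\neq x^{(2)}$ since $\delta_1=1$ would force $(r_1+r_n)r_{n+2}=(m-1)r_1r_n$, excluded by \eqref{eq:nondegenerate}.

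For the nondegeneracy claim, Lemma~\ref{lm:J0} gives $\det J = r_2\cdots r_{n-1}\,x_2\cdots x_{n-1}\bigl((m-1)r_1r_nx_1-(r_n+r_1x_2)r_{n+2}\bigr)$ under \eqref{eq:con0}. Since the $r_i$ are positive and all coordinates of $x^{(1)},x^{(2)}$ are positive, $\det J$ vanishes at such a point iff $(m-1)r_1r_nx_1=(r_n+r_1x_2)r_{n+2}$. At $x^{(1)}=(1,\ldots,1)$ this reads $(m-1)r_1r_n=(r_n+r_1)r_{n+2}$, which is exactly negated by \eqref{eq:nondegenerate}; hence $\det J\neq 0$ there. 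At $x^{(2)}$ I would substitute $x_1=\delta_1=\frac{(r_1+r_n)r_{n+2}}{(m-1)r_1r_n}$ and $x_2=\delta_2=\frac{((m-1)r_1-r_{n+2})r_n}{r_1r_{n+2}}$ into $(m-1)r_1r_nx_1-(r_n+r_1x_2)r_{n+2}$ and simplify: the first term becomes $(r_1+r_n)r_{n+2}$, the bracket $r_n+r_1\delta_2$ becomes $r_n+\frac{((m-1)r_1-r_{n+2})r_n}{r_{n+2}}=\frac{(m-1)r_1r_n}{r_{n+2}}$, so the second term is $(m-1)r_1r_n$, and the difference is $(r_1+r_n)r_{n+2}-(m-1)r_1r_n$, again nonzero by \eqref{eq:nondegenerate}. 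Therefore $J$ has full rank at both solutions, completing the proof.
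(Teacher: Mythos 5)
Your treatment of $x^{(1)}$, of positivity and distinctness, and of nondegeneracy is essentially identical to the paper's: the paper also verifies $(1,\ldots,1)$ directly from \eqref{eq:coneq}, reads positivity of the $\delta_i$ off \eqref{eq:conie-eq11}--\eqref{eq:conie-eq12}, and applies Lemma~\ref{lm:J0} to get $\det J|_{x=x^{(1)}}=r_2\cdots r_{n-1}\bigl((m-1)r_1r_n-(r_1+r_n)r_{n+2}\bigr)$ and $\det J|_{x=x^{(2)}}=r_2\cdots r_{n-1}\delta_2\cdots\delta_{n-1}\bigl((r_1+r_n)r_{n+2}-(m-1)r_1r_n\bigr)$, both nonzero by \eqref{eq:nondegenerate}; your simplification of $r_n+r_1\delta_2$ is exactly the computation behind the second formula.

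Where you diverge is in constructing $x^{(2)}$, and here your route leaves its hardest step unexecuted. You propose a forward triangular solve ($f_1,\ldots,f_{n-1}$ determine $x_2,\ldots,x_n$ from a guessed $x_1=\delta_1$) followed by a final verification that $f_n=0$, which you candidly label ``the main obstacle'' and do not carry out. The paper instead replaces $(f_1,\ldots,f_n)$ by the invertible linear combination of partial alternating sums $\Sigma_i:=\sum_{j=1}^{i}(-1)^{j-1}f_j$. Under \eqref{eq:con0} these telescope dramatically: $\Sigma_n=(m-1)r_n(x_1-1)+r_{n+2}(x_2-1)$ is linear in $x_1,x_2$ alone, so eliminating $x_2$ against $f_1=0$ yields the quadratic $(m-1)r_1r_nx_1^2-((m-1)r_1r_n+(r_1+r_n)r_{n+2})x_1+(r_1+r_n)r_{n+2}=0$ with roots $1$ and $\delta_1$ --- this is where $\delta_1$ comes from, rather than being an ansatz --- and each $\Sigma_{i-1}=0$ then gives $\delta_i$ in one step in the stated closed form. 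Because $(\Sigma_1,\ldots,\Sigma_n)$ generate the same ideal as $(f_1,\ldots,f_n)$, no residual check of $f_n=0$ is needed. Two consequences for your version: (i) the claim that the final substitution ``reduces to a polynomial identity that holds identically'' is true only for the two special values $x_1\in\{1,\delta_1\}$, not as an identity in $x_1$, and establishing it amounts to redoing the telescoping computation you skipped; (ii) your intermediate recursion $x_{i+1}=\bigl(r_{i-1}+r_i-r_{i-1}x_{i-1}x_i\bigr)/(r_ix_i)$ does not visibly match the closed form for $\delta_i$ in the statement --- reconciling the two requires the identity $r_{i-1}\delta_{i-1}\delta_i=r_{i-1}+(-1)^{i-1}\tfrac{m\left((r_1+r_n)r_{n+2}-(m-1)r_1r_n\right)}{(m-1)r_1}$, which is again the paper's telescoped relation. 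So your outline is viable, but as written the central computation is asserted rather than proved; the alternating-sum device is the missing idea that turns it into a two-line calculation.
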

\begin{proof}
First, it is straightforward to check that if the rate constants satisfy condition \eqref{eq:coneq}, then $x^{(1)}=(1,1,\ldots,1)$ is a positive solution
to $f(x)=0$ for $f$ in \eqref{eq:system}.

Below, we show how to obtain the other solution $x^{(2)}$ to $f(x)=0$. Note that  under  the condition \eqref{eq:con0}, we have
\[\sum_{j=1}^n(-1)^{j-1}f_j=(m-1)r_nx_1+r_{n+2}x_2-(m-1)r_n-r_{n+2}.\]
We solve for $x_2$ from $\sum_{j=1}^n(-1)^{j-1}f_j=0$, substitute the expression into $f_1=0$, and obtain a quadratic equation in terms of only $x_1$:
\[(m-1)r_1r_nx_1^2-((m-1)r_1r_n+(r_1+r_n)r_{n+2})x_1+(r_1+r_n)r_{n+2}=0,\]
which indeed has two solutions: $x^{(1)}_1=1$ and $x^{(2)}_1=\delta_1$. We substitute $x^{(2)}_1=\delta_1$ into $f_1=0$ and solve that $x^{(2)}_2=\delta_2$. Note that for $3\le i\le n$,  under  the condition \eqref{eq:con0},  we have
\[\sum_{j=1}^{i-1}(-1)^{j-1}f_j=(-1)^{i-1}r_{i-1}x_{i-1}x_{i}+(-1)^{i}r_{i-1}-r_n(x_1-1)+r_{n+2}(x_2-1).\]
So, we can substitute $x^{(2)}_1=\delta_1$ and $x^{(2)}_2=\delta_2$ into $\sum_{j=1}^{i-1}(-1)^{j-1}f_j=0$ and  solve $x^{(2)}_i=\delta_i$ for $i=3,\ldots,n$.
Hence $x^{(2)}=(\delta_1,\delta_2,\ldots,\delta_n)$ is also a solution
to $f(x)=0$  under  the condition \eqref{eq:con0}. If the rate constants satisfy conditions \eqref{eq:conie-eq11}--\eqref{eq:conie-eq12}, then $x^{(2)}$ is clearly positive.
Obviously, under the condition \eqref{eq:nondegenerate}, we have $1\neq \delta_1$, and hence $x^{(1)}\neq x^{(2)}$.

Below, we show that if
the inequality \eqref{eq:nondegenerate} is satisfied, then at both solutions $x^{(1)}$ and $x^{(2)}$, the Jacobian matrix $J$ has nonzero determinants.
%We denote the Jacobian matrix $J|_{r_{n+1}=r_{n+3}=\cdots=r_{2n}=0}$ by $J_0$, where $J$ is defined in \eqref{eq:jac}.
In fact,  by Lemma \ref{lm:J0},
\begin{align*}
\det J ~=~ r_2\cdots r_{n-1} x_2\cdots x_{n-1}((m-1)r_1r_nx_1-(r_n+r_1x_2)r_{n+2}).
\end{align*}
Therefore,
$$\det J|_{x=x^{(1)}}=r_2\cdots r_{n-1}((m-1)r_1r_n-(r_1+r_n)r_{n+2})$$
and
$$\det J|_{x=x^{(2)}}=r_2\cdots r_{n-1} \delta_2\cdots \delta_{n-1}((r_1+r_n)r_{n+2}-(m-1)r_1r_n),$$
%So, if  \eqref{eq:conneq} holds, then $\det(J_0)|_{x^{(1)}}\ne0, \det(J_0)|_{x^{(2)}}\ne0$.
which are nonzero if \eqref{eq:nondegenerate} holds.
\end{proof}
%The signs of $\dom(J_0)|_{\bx_1}$ and $\dom(J_0)|_{\bx_2}$ are opposite. It follows that one of $\bx_1,\bx_2$ is stable.

As mentioned before, for the special choice of rate-constant values in the condition \eqref{eq:con0}, besides two solutions $x^{(1)}$ and $x^{(2)}$ shown in Lemma \ref{lm},  the polynomial system $f=0$ in \eqref{eq:system} has a ``special" solution  with
its last coordinate $x_{n}=+\infty$. In order to make this third solution ``visible", we need to apply a variable substitution to the system $f$.

First,  define a map $\varphi: {\mathbb R}^{n}\rightarrow {\mathbb R}^n$ as follows:
\begin{align}\label{eq:mapv}
\varphi(y_1, \ldots, y_n)~=~\left(y_1, \ldots, y_{n-2}, \frac{r_{2n}y_{n-1}}{y_n}, \frac{y_n}{r_{2n}}
\right).
\end{align}
%let
%\begin{equation}\label{sec3-eq}
%x=\varphi(y):\begin{cases}
%x_i=y_i, \quad\textrm{ for }i=1,\ldots,n-2\\
%x_{n-1}=\frac{r_{2n}y_{n-1}}{y_n}\\
%x_n=\frac{y_n}{r_{2n}}
%\end{cases}.
%\end{equation}
We substitute $x=\varphi(y)$ into $f$ in (\ref{eq:system}) and view $y_1,\ldots,y_n$ as new variables. We define the resulting rational functions as
\begin{align}\label{eq:defp}
p(y_1, \ldots, y_n;r_1, \ldots, r_{2n}):= f|_{x=\varphi(y)} \;\in\; {\mathbb Q}(r_1, \ldots, r_{2n}, y_1, \ldots, y_n).
\end{align}
Then, substitute \eqref{eq:con0} into  %the numerators of
 $p$, and define the resulting polynomials as
 \begin{align}\label{eq:defg}
 g(y_1, \ldots, y_n;r_1, \ldots, r_n, r_{n+2}):=p|_{r_{n+1}=0, \;r_{n+i}=0\; (3\leq i\leq n)}.
 \end{align}
 Denote by $J_{p}$ and $J_g$ respectively the Jacobian matrix of $p$ and $g$ with respect to variables $y_1, \ldots, y_{n}$.

 When $n=3$, the system $g$ in \eqref{eq:defg} is given by the polynomials:
\begin{equation}
\begin{cases}
g_1 ~=~ r_1+r_3-r_3y_1,\\
g_2 ~=~ r_1+r_2+r_{5}-r_2y_2, \\
g_3 ~=~ r_{2}-mr_3-r_{2}y_{2}+mr_{3}y_1-y_3.%\quad\quad\quad\quad \text{when}\; n=3,
\end{cases}\label{eq:system3n3}
\end{equation}

%\begin{equation}
%\begin{cases}
%g_1  ~=~ r_1+r_4-r_4x_1-r_1x_1x_2,\\
%g_2 ~=~ (r_1+r_2+r_6-r_1x_1x_2-r_6x_2)y_4,\\
%g_3 ~=~ (r_2+r_3-r_3y_3)y_4, \\
%g_4 ~=~ r_{3}-mr_4-r_{3}y_{3}+mr_{4}x_1-y_4\quad\quad\quad\quad \text{when}\; n=4,
%\end{cases}\label{eq:system3n4}
%\end{equation}

\begin{lemma}\label{lm:thirdn3}
 For any  integer $m\geq 2$, if the positive rate constants $r_1$ and $r_5$ satisfy, %\eqref{eq:condn3},
\begin{align}\label{eq:condn3}
(m-1)r_1-r_5~>~0,
\end{align}
then  for any positive rate constant $r_2$ and $r_3$, the system $g=0$ in \eqref{eq:system3n3} has a positive solution $\xi=(\xi_1, \xi_2, \xi_3)$ such that $\det J_g|_{y=\xi}\neq 0$.
\end{lemma}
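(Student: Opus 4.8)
The plan is to exploit the fact that the polynomial system $g=0$ in \eqref{eq:system3n3} is \emph{linear} and, after reordering the equations, triangular: $g_1$ involves only $y_1$, $g_2$ involves only $y_2$, and $g_3$ involves $y_1,y_2,y_3$. So I would simply solve it by back-substitution. From $g_1=0$ one obtains $\xi_1=\frac{r_1+r_3}{r_3}$, from $g_2=0$ one obtains $\xi_2=\frac{r_1+r_2+r_5}{r_2}$, and substituting these into $g_3=0$ and simplifying gives $\xi_3=(m-1)r_1-r_5$. This already shows existence and uniqueness of the solution $\xi=(\xi_1,\xi_2,\xi_3)$ for every choice of positive rate constants.

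Next I would verify positivity of $\xi$. The first two coordinates are $\xi_1=1+\frac{r_1}{r_3}$ and $\xi_2=1+\frac{r_1+r_5}{r_2}$, which are positive whenever $r_1,r_2,r_3,r_5>0$, so no extra hypothesis is needed there. The third coordinate is $\xi_3=(m-1)r_1-r_5$, which is positive exactly under the assumption \eqref{eq:condn3}. Hence $\xi\in{\mathbb R}^{3}_{>0}$ precisely when \eqref{eq:condn3} holds, and this computation explains why the single inequality \eqref{eq:condn3} is the natural hypothesis.

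Finally, for nondegeneracy I would compute $J_g$. Since $g$ is affine-linear in $y$, the Jacobian $J_g$ with respect to $y_1,y_2,y_3$ is the \emph{constant} matrix
\[
J_g=\begin{bmatrix}-r_3 & 0 & 0\\ 0 & -r_2 & 0\\ mr_3 & -r_2 & -1\end{bmatrix},
\]
which is lower triangular with diagonal entries $-r_3,-r_2,-1$, so $\det J_g=-r_2r_3\neq 0$; in particular $\det J_g|_{y=\xi}\neq 0$. There is essentially no obstacle in this argument: the only non-immediate step is the one-line algebraic simplification yielding $\xi_3=(m-1)r_1-r_5$, and everything else is a direct verification.
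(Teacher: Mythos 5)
Your proposal is correct and follows essentially the same route as the paper: solve the (triangular, affine-linear) system explicitly to get $\xi_1=\frac{r_1+r_3}{r_3}$, $\xi_2=\frac{r_1+r_2+r_5}{r_2}$, $\xi_3=(m-1)r_1-r_5$, observe that positivity reduces to \eqref{eq:condn3}, and compute $\det J_g=-r_2r_3\neq 0$. Your explicit display of the constant lower-triangular Jacobian is a minor elaboration of what the paper states as a straightforward computation.
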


\begin{proof}
Solve the system $g=0$ from (\ref{eq:system3n3})  for the variables $y_1,y_2,y_3$ over ${\mathbb Q}(r)$, and obtain a solution in terms of $r$:
$$\xi_1~:=~\frac{r_1+r_3}{r_3},\;\; \xi_2~:=~\frac{r_1+r_2+r_5}{r_2},\; \;\xi_3~:=~(m-1)r_1-r_5.$$
Clearly,  if $(m-1)r_1-r_5>0$, then for any positive $r_2$ and $r_3$, the above solution is positive.
It is straightforward to compute that $\det J_g|_{y=\xi}=-r_2r_3\neq 0$.
\end{proof}

\begin{remark}
Remark that the inequality \eqref{eq:condn3} is a specific case of  the inequality \eqref{eq:conie-eq11} for $n=3$.
\end{remark}

 Now, we focus on the case when $n\geq 5$.
 Explicitly, the form of $p$ in \eqref{eq:defp} for $n\geq 5$ is given below:
%i.e.,
{\footnotesize
\begin{equation*}
\begin{cases}
p_1  ~=~ -r_1y_1y_2-r_ny_1-r_{n+1}y_1+r_1+r_n,\\
p_2 ~=~  -r_1x_1x_2-r_2x_2x_3 - r_{n+2}x_2+r_1+r_2+r_{n+2},\\
p_i ~=~ -r_{i-1}y_{i-1}y_i-r_iy_iy_{i+1}-r_{n+i}y_i+ r_{i-1}+r_i + r_{n+i}, \quad\quad \textrm{ for }2\le i\le n-3,\\
p_{n-2}~=~ -r_{n-3}y_{n-3}y_{n-2}-r_{n-2}y_{n-2}\frac{r_{2n}y_{n-1}}{y_n}-r_{2n-2}y_{n-2}+r_{n-3}+r_{n-2}+r_{2n-2}, \\
p_{n-1}~=~ -r_{n-2}y_{n-2}\frac{r_{2n}y_{n-1}}{y_n}-r_{n-1}y_{n-1}-r_{2n-1}y_{n-1}+r_{n-2}+r_{n-1} +r_{2n-1},\\
p_{n}~=~ -r_{n-1}y_{n-1}+mr_{n}x_1-y_n +r_{n-1}+r_{2n}-mr_n.
\end{cases}\label{eq:systemp}
\end{equation*}
}
 %i.e.,
 Explicitly, the form of $g$ in \eqref{eq:defg}  for $n\geq 5$ is given below:
\begin{equation}
{\footnotesize
\begin{cases}
g_1  ~=~ r_1+r_n-r_ny_1-r_1y_1y_2,\\
g_2 ~=~  r_1+r_2+r_{n+2}-r_1y_1y_2-r_2y_2y_3 - r_{n+2}y_2,\\
g_i ~=~ r_{i-1}+r_i-r_{i-1}y_{i-1}y_i-r_iy_iy_{i+1},\quad\quad\quad \;\;\;\textrm{ for }3\le i\le n-3,\\
%g_{n-2}~:=~ \left(r_{n-3}+r_{n-2}-r_{n-2}x_{n-3}x_{n-2}\right)y_n, \\
%g_{n-1}~:=~ \left(r_{n-2}+r_{n-1}-r_{n-1}y_{n-1}\right)y_n,\\
g_{n-2}~=~ r_{n-3}+r_{n-2}-r_{n-3}y_{n-3}y_{n-2}, \\
g_{n-1}~=~ r_{n-2}+r_{n-1}-r_{n-1}y_{n-1},\\
g_{n}~=~ r_{n-1}-mr_n+mr_{n}x_1-r_{n-1}y_{n-1}-y_n.
\end{cases}\label{system1}
}
\end{equation}
%Remark that the format \eqref{system1} of $g$ is for $n\neq 3$. When $n=3$, after the above substitutions, the system $g$ should be read as:
%\begin{equation}
%\begin{cases}
%g_1 ~=~ r_1+r_3-r_3y_1,\\
%g_2 ~=~ r_1+r_2+r_{5}-r_2y_2, \\
%g_3 ~=~ r_{2}-mr_3-r_{2}y_{2}+mr_{3}y_1-y_3.%\quad\quad\quad\quad \text{when}\; n=3,
%\end{cases}\label{eq:system3n3}
%\end{equation}
%\begin{equation}
%\begin{cases}
%g_1  ~=~ r_1+r_4-r_4x_1-r_1x_1x_2,\\
%g_2 ~=~ (r_1+r_2+r_6-r_1x_1x_2-r_6x_2)y_4,\\
%g_3 ~=~ (r_2+r_3-r_3y_3)y_4, \\
%g_4 ~=~ r_{3}-mr_4-r_{3}y_{3}+mr_{4}x_1-y_4\quad\quad\quad\quad \text{when}\; n=4,
%\end{cases}\label{eq:system3n4}
%\end{equation}

\begin{lemma}\label{lm:third}
For any integer $m\geq 2$, and for any odd integer $n>3$, if the positive rate constants $r_1, \ldots, r_n, r_{n+2}$ satisfy the  in equalities \eqref{eq:sec3-eq11}--\eqref{eq:sec3-eq12} and
\begin{align}\label{eq:sec3-eq13}
(m-1)r_1r_{n-2}+m(m-1)r_1r_n~&>~m(r_1+r_n)r_{n+2},
\end{align}
 then
the system $g=0$ in \eqref{system1} has a positive solution $\xi=(\xi_1, \ldots, \xi_n)$ such that $\det J_g|_{y=\xi}\neq 0$.
\end{lemma}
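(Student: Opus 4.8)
The plan is to follow the strategy of Lemma~\ref{lm}: first construct $\xi$ explicitly by solving the chain $g=0$ with telescoping alternating sums, and then read off $\det J_g|_{y=\xi}\neq 0$ from the near-tridiagonal shape of $J_g$ together with Lemma~\ref{sec4-lm1}. For $1\le k\le n-2$ set $S_k:=\sum_{j=1}^{k}(-1)^{j-1}g_j$. Since the quadratic monomials $r_jy_jy_{j+1}$ in $g_j$ and $g_{j+1}$ cancel, the subsystem $g_1=\cdots=g_{n-2}=0$ is equivalent to $S_1=\cdots=S_{n-2}=0$, and one computes, for $2\le k\le n-3$,
\[
S_k=\bigl(r_n-r_{n+2}-r_ny_1+r_{n+2}y_2\bigr)+(-1)^{k-1}r_k\bigl(1-y_ky_{k+1}\bigr),
\]
while, using that $n$ is odd together with the end equation $g_{n-2}=r_{n-3}+r_{n-2}-r_{n-3}y_{n-3}y_{n-2}$ (which lost its $y_{n-2}y_{n-1}$ term under the substitution), $S_{n-2}=r_n+r_{n-2}-r_{n+2}-r_ny_1+r_{n+2}y_2$. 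Imposing $S_{n-2}=0$ makes the parenthetical block above equal to the constant $-r_{n-2}$, so $S_k=0$ collapses to the product relations $\xi_k\xi_{k+1}=1+(-1)^k r_{n-2}/r_k$ $(2\le k\le n-3)$, and eliminating $y_2$ between $S_1=0$ and $S_{n-2}=0$ gives a quadratic $Q(\xi_1)=r_1r_n\xi_1^2+\bigl((r_1+r_n)r_{n+2}-r_1(r_n+r_{n-2})\bigr)\xi_1-r_{n+2}(r_1+r_n)=0$ with $Q(0)<0$ and positive leading term, hence a unique positive root $\xi_1$. Then $\xi_2$ comes from $g_1=0$, the remaining $\xi_3,\dots,\xi_{n-2}$ from the product relations, $\xi_{n-1}=(r_{n-2}+r_{n-1})/r_{n-1}$ from $g_{n-1}=0$, and $\xi_n=mr_n(\xi_1-1)-r_{n-2}$ from $g_n=0$; a quick back-substitution confirms that $\xi$ solves all of $g=0$.

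Next I would verify positivity of $\xi$. Here $\xi_1,\xi_{n-1}>0$ are automatic. One has $\xi_2>0\iff\xi_1<1+r_1/r_n$, and evaluating $Q$ at $1+r_1/r_n$ gives a positive multiple of $r_1+r_{n+2}-r_{n-2}$, so this is exactly \eqref{eq:sec3-eq11}. Similarly $\xi_n>0\iff\xi_1>1+r_{n-2}/(mr_n)$, and evaluating $Q$ at $1+r_{n-2}/(mr_n)$ gives a positive multiple of $m(r_1+r_n)r_{n+2}-(m-1)r_1(mr_n+r_{n-2})$, so $\xi_n>0$ is exactly \eqref{eq:sec3-eq13}. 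Finally, the product relations give $\xi_{k+1}>0$ whenever $\xi_k>0$ and $1+(-1)^k r_{n-2}/r_k>0$; the latter holds automatically for even $k$ and, for odd $k\in\{3,5,\dots,n-4\}$, is precisely \eqref{eq:sec3-eq12}, so $\xi_3,\dots,\xi_{n-2}>0$ by induction on $k$. (When $n=5$ there are no middle equations and \eqref{eq:sec3-eq12} is vacuous; this case is handled the same way, only shorter.)

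For nondegeneracy, observe that in $J_g$ the row of $g_{n-1}$ has the single nonzero entry $\partial g_{n-1}/\partial y_{n-1}=-r_{n-1}$ and the column of $y_n$ has the single nonzero entry $\partial g_n/\partial y_n=-1$; expanding $\det J_g$ along these reduces it to $\pm r_{n-1}\det T$, where $T=\partial(g_1,\dots,g_{n-2})/\partial(y_1,\dots,y_{n-2})|_{y=\xi}$ is tridiagonal. Apart from the extra summand $-r_{n+2}$ in its $(2,2)$ entry (coming from the $r_{n+2}y_2$ term of $g_2$), $T$ has exactly the form of the matrix in Lemma~\ref{sec4-lm1}; so expanding $\det T$ by multilinearity in the second row and applying Lemma~\ref{sec4-lm1} twice yields
\[
\det T=\pm\Bigl(\prod_{j=2}^{n-3}r_j\xi_j\Bigr)\bigl(r_1r_n\xi_1+r_{n+2}(r_n+r_1\xi_2)\bigr),
\]
which is nonzero because every factor is positive. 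Hence $\det J_g|_{y=\xi}\neq 0$.

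The bookkeeping parts — the telescoping in the first step and the determinant reduction via Lemma~\ref{sec4-lm1} — are routine. The main obstacle is the positivity step, and within it the elementary but somewhat lengthy algebraic identity that $Q\bigl(1+\tfrac{r_{n-2}}{mr_n}\bigr)$ is, up to a positive factor, equal to $m(r_1+r_n)r_{n+2}-(m-1)r_1(mr_n+r_{n-2})$; this is what forces the precise shape of hypothesis \eqref{eq:sec3-eq13}, and making the reduction sharp (rather than settling for a cruder sufficient condition) is the delicate point.
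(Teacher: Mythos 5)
Your proposal is correct and follows essentially the same route as the paper's proof: the telescoping alternating sums (yours run forward from $g_1$, the paper's backward from $g_{n-2}$, but they yield the identical product relations $\xi_k\xi_{k+1}=1+(-1)^k r_{n-2}/r_k$ and the identical quadratic $Q=h_1$ in $y_1$), the same positivity analysis tying $\xi_2>0$, $\xi_i>0$, and $\xi_n>0$ to \eqref{eq:sec3-eq11}, \eqref{eq:sec3-eq12}, and \eqref{eq:sec3-eq13} respectively, and the same reduction of $\det J_g$ to a tridiagonal determinant handled by Lemma~\ref{sec4-lm1}, arriving at the same nonzero value $r_2\cdots r_{n-3}r_{n-1}\xi_2\cdots\xi_{n-3}\bigl(r_1r_n\xi_1+r_{n+2}(r_n+r_1\xi_2)\bigr)$ up to sign. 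The only cosmetic difference is that you certify $\xi_2>0$ by evaluating $Q$ at the threshold $1+r_1/r_n$, whereas the paper reads it off the closed form $\xi_2=(r_1+r_{n+2}-r_{n-2})/(r_1\xi_1+r_{n+2})$; both are valid.
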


\begin{proof}
%To obtain the third steady state of $K^{0}_{m,n}$,
%For $n\geq 4$, we substitute \eqref{eq:coneq}, and also substitute $x_{n-1}=r_{2n}y_{n-1}/y_n$, $x_n=y_n/r_{2n}$ into (\ref{eq:system}). Then set $r_{n+1}=r_{n+3}=\cdots=r_{2n}=0$, and take the numerators:
%and we obtain:
The goal is to find a positive solution $\xi=(\xi_1, \ldots, \xi_n)$ to the equations $$g_1(y;r_1, \ldots, r_n, r_{n+2})=\cdots=g_n(y;r_1, \ldots, r_n, r_{n+2})=0$$ for positive parameter values $r_1, \ldots, r_n, r_{n+2}$.
First, %under the assumption $y_n>0$,
we solve for $y_{n-1}$ from $g_{n-1}=0$ over ${\mathbb Q}(r)$, and we get
\begin{align}\label{eq:elimyn1}
y_{n-1}~=~\frac{r_{n-1}+ r_{n-2}}{r_{n-1}}.
\end{align}
Second, we substitute \eqref{eq:elimyn1} into $g_n$, and then we solve for $y_n$ from  $g_{n}=0$ over ${\mathbb Q}(r, y_1)$:
\begin{align}\label{eq:elimyn}
y_{n}~=~mr_n(y_1-1)-r_{n-2}.
\end{align}
Now, we show how to solve  for $y_2,\ldots, y_{n-2}$ from (\ref{system1})  over ${\mathbb Q}(r, y_1)$. For this purpose,  for every $i=2, \ldots, n-2$, let
%\begin{align*}
$h_{i} ~=~ \Sigma_{k=i}^{n-2}(-1)^kg_k$.
%\end{align*}
Notice that $n$ is odd. So, explicitly, we obtain
\begin{align*}\label{eq:elimh}
h_2~&=~r_1 + r_{n+2} -r_1y_1y_2 - r_{n+2}y_2 - r_{n-2}, \quad\quad \text{and}\\
h_i~&=~(-1)^i\left(r_{i-1}-r_{i-1}y_{i-1}y_i\right)-r_{n-2} \quad\quad \text{for}~i=3, \ldots, n-2.
\end{align*}
We solve for $y_i$ from $h_i=0$, and we have
\begin{align}
y_2~&=~\frac{r_1-r_{n-2}+ r_{n+2}}{r_1y_1 + r_{n+2}}, \quad\quad \text{and} \label{eq:elimx2}\\
y_i~&=~\frac{r_{i-1}-(-1)^{i}r_{n-2}}{r_{i-1}y_{i-1}}  \quad\quad \text{for}~i=3, \ldots, n-2.\notag
\end{align}
We substitute \eqref{eq:elimx2} into $g_1$, and we obtain a quadratic polynomial in $y_1$:
\begin{equation*}\label{eq:eq1}
h_1~:=~r_1r_ny_1^2+(r_1r_{n+2}+r_nr_{n+2}-r_1r_{n-2}-r_1r_{n})y_1-(r_1+r_{n})r_{n+2}.
\end{equation*}
It is straightforward to check by the discriminant and Vieta's formulas that  for any positive
parameters $r_1, r_{n-2}, r_n, r_{n+2}$, the quadratic equation
$h_1(y_1)=0$ has two real roots, and only one of these two roots is positive.
Let $\xi_1$ be this positive root. Substituting $\xi_1$ back to \eqref{eq:elimyn1}, \eqref{eq:elimyn}, and \eqref{eq:elimx2}, we obtain a solution $\xi=(\xi_1,\ldots,\xi_{n-1},\xi_n)$ of $g=0$ in (\ref{system1}), where
$$\xi_2=\frac{r_1+r_{n+2}-r_{n-2}}{r_1\xi_1+r_{n+2}}, \quad \xi_i=\frac{r_{i-1}-(-1)^{i}r_{n-2}}{r_{i-1}\xi_{i-1}},i=3,\ldots,n-2$$
and
$$\xi_{n-1}=\frac{r_{n-2}+r_{n-1}}{r_{n-1}}, \quad \xi_n=mr_n(\xi_1-1)-r_{n-2}.$$
%Then $(\xi_1,\ldots,\xi_{n-1},\xi_n)$ is a positive solution of (\ref{system1}).
%We claim that
%$$\bx_3\approx(\xi_1,\ldots,\xi_{n-2},r_{2n}\xi_{n-1}/\xi_n,\xi_n/r_{2n})$$
%is the third steady state of $\widetilde{K}^*_{m,n}$.

We show the positivity of this solution. Clearly, if \eqref{eq:sec3-eq11}  holds, then $\xi_2>0$. Also, if \eqref{eq:sec3-eq12} holds, then for every $i=3,\ldots,n-2$, $\xi_i>0$ holds.
Note that $\xi_n>0$ if $\xi_1>\frac{r_{n-2}}{mr_n}+1$. Note also $\xi_1$ is the only positive root of
$h_1(y_1)=0$. So, if $h_1(\frac{r_{n-2}}{mr_n}+1)<0$, then $\xi_1>\frac{r_{n-2}}{mr_n}+1$.
Note % $h_1(\frac{r_{n-2}}{mr_n}+1)<0$
$$h_1(\frac{r_{n-2}}{mr_n}+1)=-\frac{r_{n-2}}{m^2r_n}\left((m-1)r_1r_{n-2}+m\left((m-1)r_1r_n-(r_1+r_n)r_{n+2}\right)\right).$$
So  for positive $m, r_{n-2}$ and $r_n$, $h_1(\frac{r_{n-2}}{mr_n}+1)<0$  is equivalent to \eqref{eq:sec3-eq13}.
Thus, if \eqref{eq:sec3-eq11}--\eqref{eq:sec3-eq12} and \eqref{eq:sec3-eq13} are satified, then $\xi$ is positive.

Finally, we show $\det J_g|_{y=\xi}\neq 0$. In fact, the Jacobian matrix of $g_1, \ldots,  g_n$ with respect to $y_1, \ldots, y_n$ is
\begin{equation*}
J_g=
{\fontsize{3pt}{2pt}
\begin{bmatrix}
-r_1y_2-r_n&-r_1y_1&\cdots&0&0&0\\
-r_1y_2&-r_1y_1-r_2y_3-r_{n+2}&\ddots&\vdots&\vdots&\vdots\\
0&-r_2y_3&\ddots&-r_{n-3}y_{n-3}&0&0\\
\vdots&0&\ddots&-r_{n-3}y_{n-3}&0&0\\
0&\vdots&\ddots&0&-r_{n-1}&0\\
mr_n&0&\cdots&0&-r_{n-1}&-1
\end{bmatrix}.
}
\end{equation*}
Expanding $\det J_g$ with respect to the first row and taking advantage of Lemma \ref{sec4-lm1}, we have
$$\det J_g=-r_2\cdots r_{n-3}r_{n-1}y_2\cdots y_{n-3}(r_{n}r_{n+2}+r_1r_ny_1+r_1r_{n+2}y_2)$$
which is obviously nonzero at any positive solution $y=\xi$.
\end{proof}

\begin{remark}
Remark that the inequality \eqref{eq:sec3-eq13} is a specific case of  the inequality \eqref{eq:conie-eq12} for $i=n-1$.
\end{remark}

\begin{lemma}\label{lm:jac2}
If for a choice of the rate constants $r_1, \ldots, r_{2n}$,
 the system $p=0$ has a solution  $\xi=(\xi_1, \ldots,  \xi_{n})$  such that $\xi_n\neq 0$ and  $\det J_p|_{y=\xi}\neq 0$, and if the rate constant $r_{2n}\neq 0$, then for the same choice of rate constants,
$\hat x:=(\xi_1, \ldots,  \xi_{n-2}, \frac{r_{2n}\xi_{n-1}}{ \xi_n},\frac{ \xi_n}{r_{2n}})$ is  a solution to $f$ such that $\det J|_{x=\hat x}\neq 0$.
%Assume $r_{2n}\neq 0$. There is a one-to-one correspondence between the nondegenerate solutions to $f$ with $x_n\ne0$ and the nondegenerate solutions to $p$ with $y_n\ne0$.
\end{lemma}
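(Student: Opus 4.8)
The plan is to recognize that $\hat x=\varphi(\xi)$ and that, by construction, $p=f\circ\varphi$; the statement is then essentially the chain rule together with a one-line determinant computation for $\varphi$. First I would note that, by the definition \eqref{eq:defp} of $p$, for every $y$ at which $\varphi$ is defined one has $p(y)=f(\varphi(y))$. Since $r_{2n}\neq 0$ and $\xi_n\neq 0$, the map $\varphi$ in \eqref{eq:mapv} is defined and smooth in a neighbourhood of $\xi$, and $\varphi(\xi)=\hat x$. Hence $f(\hat x)=f(\varphi(\xi))=p(\xi)=0$, so $\hat x$ is a solution to $f$.

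For the nondegeneracy claim, I would apply the chain rule to $p=f\circ\varphi$, which at $y=\xi$ gives $J_p|_{y=\xi}={\rm Jac}(f)(\varphi(\xi))\cdot{\rm Jac}(\varphi)(\xi)=J|_{x=\hat x}\cdot{\rm Jac}(\varphi)(\xi)$; note this is legitimate at $\xi$ precisely because $\xi_n\neq 0$, so we are away from the poles of the rational map $\varphi$. Taking determinants, $\det J_p|_{y=\xi}=\bigl(\det J|_{x=\hat x}\bigr)\cdot\det{\rm Jac}(\varphi)(\xi)$. It then suffices to show $\det{\rm Jac}(\varphi)(\xi)\neq 0$ and solve for $\det J|_{x=\hat x}$.

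The remaining step is to compute $\det{\rm Jac}(\varphi)$. From \eqref{eq:mapv}, the first $n-2$ coordinates of $\varphi$ are $y_1,\dots,y_{n-2}$, contributing an $(n-2)\times(n-2)$ identity block, while the last two coordinates $\tfrac{r_{2n}y_{n-1}}{y_n}$ and $\tfrac{y_n}{r_{2n}}$ contribute the lower-right $2\times 2$ block
\begin{equation*}
\begin{bmatrix}
\dfrac{r_{2n}}{y_n} & -\dfrac{r_{2n}y_{n-1}}{y_n^2}\\[1ex]
0 & \dfrac{1}{r_{2n}}
\end{bmatrix},
\end{equation*}
and all other entries below the diagonal blocks vanish. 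Since ${\rm Jac}(\varphi)$ is block lower triangular, $\det{\rm Jac}(\varphi)=\tfrac{r_{2n}}{y_n}\cdot\tfrac{1}{r_{2n}}=\tfrac{1}{y_n}$, so $\det{\rm Jac}(\varphi)(\xi)=\tfrac{1}{\xi_n}\neq 0$. Therefore $\det J|_{x=\hat x}=\xi_n\,\det J_p|_{y=\xi}\neq 0$, which completes the argument.

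There is no genuine obstacle here: the only point requiring care is the determinant of ${\rm Jac}(\varphi)$ and the observation that the hypotheses $\xi_n\neq 0$ and $r_{2n}\neq 0$ are exactly what guarantees both that $\varphi$ is a well-defined smooth map near $\xi$ (so the chain rule applies) and that $\det{\rm Jac}(\varphi)(\xi)\neq 0$ (so the determinant identity can be inverted).
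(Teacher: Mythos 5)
Your proof is correct and follows essentially the same route as the paper: $\hat x=\varphi(\xi)$ solves $f=0$ because $p=f\circ\varphi$, and the chain rule $J_p=J\cdot J_\varphi$ together with the invertibility of $J_\varphi$ at $y=\xi$ (guaranteed by $\xi_n\neq 0$ and $r_{2n}\neq 0$) yields $\det J|_{x=\hat x}\neq 0$. In fact your explicit $2\times 2$ block, with determinant $1/y_n$, is the correct Jacobian of $\varphi$ (the matrix displayed in the paper is actually the Jacobian of $\varphi^{-1}$ written in the $y$ coordinates), but this discrepancy is immaterial since either matrix is invertible exactly when $y_n\neq 0$ and $r_{2n}\neq 0$.
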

\begin{proof}
 By the definition of the map $\varphi$ \eqref{eq:mapv}, and by the definition of the system $p$ \eqref{eq:defp}, $\hat x$ is a solution to $f$.
Note also, the Jacobian matrix of $\varphi$ with respect to $y_1, \ldots, y_n$ is
\begin{equation*}
J_{\varphi}:=\begin{bmatrix}
{\mathrm I}_{n-2}&\\
&\begin{matrix}
\frac{y_n}{r_{2n}}&\frac{r_{2n}y_{n-1}}{y_{n}}\\
&r_{2n}
\end{matrix}
\end{bmatrix},
\end{equation*}
where ${\mathrm I}_{n-2}$ denotes the identity matrix of size $(n-2)\times (n-2)$.
So, if $\xi_n\neq 0$ and $r_{2n}\neq 0$, $J_{\varphi}|_{y=\xi}$ is invertible.
By \eqref{eq:defp}, we have
$J_{p}=J\cdot J_{\varphi}$. So we conclude that $\det J|_{x=\hat x}\neq 0$.
\end{proof}

%\begin{lemma}\label{lm:jac1}
%When $r_{n+1}=r_{n+3}=\ldots=r_{2n}=0$, for any solution $\xi$ to $g$ such that  $\det J_g(\xi)\neq 0$, if $\xi_n\neq 0$, then
%it is also a solution to $p$ such that $\det J_p(\xi)\neq 0$.
%\end{lemma}
%\begin{proof}
%It is directly follows from the definition of $p$ \eqref{eq:systemp} and $g$ \eqref{system1}.
%\end{proof}

%\begin{lemma}\label{lm:jac2}
% For any solution $\xi$ to $p$ such that  $\det J_g(\xi)\neq 0$, if $r_{2n}\neq 0$, then
%$x:=(\xi_1, \ldots,  \xi_{n-2}, \frac{r_{2n}\xi_{n-1}}{ \xi_n},\frac{ \xi_n}{r_{2n}})$ is  a solution to $f$ such that $\det J_f(x)\neq 0$.
%\end{lemma}
%\begin{proof}
%\end{proof}

%Suppose that we apply a small perturbation to $\widetilde{K}^0_{m,n}$ such that $r_{n+1}=r_{n+3}=\dots=r_{2n}=\epsilon$ is a small positive real number and denote the perturbed system by $\widetilde{K}^{\epsilon}_{m,n}$.
%\begin{theorem}\label{thm3}
%For $\epsilon$ small enough, $\widetilde{K}^{\epsilon}_{m,n}$ ($m\ge2$ and $n=3,5,7,\ldots$) admits three nondegenerate steady states. {\color{red} (Conjecture 2.10 in \cite{FSW}?)}
%\end{theorem}

{\bf Proof of Theorem \ref{thm:mss}.}
%\begin{proof}
Let $r_{n+1}=\epsilon$, and for $i=3, \ldots n$, let $r_{n+i}=\epsilon$.
For $n=3$, choose values for the rate constants $r_1, r_2, r_3, r_5$ such that the conditions \eqref{eq:nondegenerate}--\eqref{eq:conie-eq12} are satisfied.  For instance,
we can choose
\begin{align}\label{eq:valuen3}
r_1=2, \;\; r_2=m+1, \;\; r_3=1, \;\; \text{and} \;\; r_5=m-1.
\end{align}
For any odd integer $n>3$, choose values for the rate constants $r_1, \ldots, r_n,  r_{n+2}$ such that the conditions \eqref{eq:nondegenerate}--\eqref{eq:sec3-eq12} are satisfied.
For instance, we can choose
\begin{align}\label{eq:value}\notag
&r_1=2,\quad r_2=r_4=\cdots=r_{n-3}=r_n=1, \\
&r_3=r_5=\cdots=r_{n-4}=r_{n-1}=m+1,\quad r_{n-2}=m, \quad r_{n+2}=m-1.
\end{align}
Here, by the values in \eqref{eq:valuen3} (or, the values in \eqref{eq:value}), we see the open set determined by the inequalities \eqref{eq:nondegenerate}--\eqref{eq:conie-eq12}  for $n=3$  (or, the inequalities \eqref{eq:nondegenerate}--\eqref{eq:sec3-eq12} for $n>3$) is non-empty.
For $i=2n+1, \ldots, 3n$, set $r_{i}$ as \eqref{eq:coneq}.

By Lemma \ref{lm}, if $\epsilon=0$, then  $f=0$ in \eqref{eq:system} has two distinct positive solutions $x^{(1)}=(1, 1, \ldots, 1)$ and $x^{(2)}$ in \eqref{eq:2solutions}.
So, by the implicit function theorem, if $\epsilon$ is  a sufficiently small positive number, then $f=0$ has two distinct positive solutions $\hat x^{(1)}$ and $\hat x^{(2)}$ with $\det J|_{x=\hat x^{(i)}}\neq 0, (i=1,2)$, where $\hat x^{(1)}=x^{(1)}$ (since $x^{(1)}$ is always a solution to $f=0$ in \eqref{eq:system}), and $\hat x^{(2)}$ is sufficiently close to $x^{(2)}$.
That means $\widetilde{K}_{m,n}$ has at least two distinct nondegenerate steady states. %for the chosen rate constants $r_i$.

 By Lemmas \ref{lm:thirdn3}--\ref{lm:third},
  for the rate constants \eqref{eq:valuen3} when $n=3$,
 or respectively   for the rate constants \eqref{eq:value} when $n>3$,
the system $g=0$ in  \eqref{eq:defg} has a positive solution $\xi=(\xi_1, \ldots, \xi_n)$ such that
$\det J_g|_{y=\xi}\neq 0$.   By the definition of $g$ in \eqref{eq:defg}, when $\epsilon=0$, $\xi$ is also a positive solution of the system $p=0$ in \eqref{eq:defp} such that
$\det J_p|_{y=\xi}\neq 0$ for the same choice of $r_1, \ldots, r_n,  r_{n+2}$.
Therefore, by the implicit function theorem, if $\epsilon$ is a sufficiently small positive number, then $p=0$ has a positive solution, say $\hat\xi=(\hat\xi_1, \ldots, \hat\xi_n)$, which is close to $\xi$, such that
$\det J_p|_{y=\hat\xi}\neq 0$. Let $\hat x^{(3)}=(\hat \xi_1, \ldots, \hat \xi_{n-2}, \frac{\epsilon \hat \xi_{n-1}}{\hat \xi_n},\frac{\hat \xi_n}{\epsilon})$. By Lemma \ref{lm:jac2}, $\hat x^{(3)}$ is a positive solution to the system $f=0$ such that $\det J|_{x=\hat x^{(3)}}\neq 0$. So $\hat x^{(3)}$ is the third nondegenerate steady state of $\widetilde{K}_{m,n}$.
%\end{proof}

\subsection{Bistability}\label{sec:proofbi}
%In the previous section, we have proved that
%$\widetilde{K}_{m,n}$ admits three nondegenerate steady states $\hat x^{(i)}$ $(i=1, 2, 3)$ (Theorem \ref{thm:mss}).
Here, we prove that two of those
three steady states stated in Theorem \ref{thm:mss} are stable if we replace the condition \eqref{eq:nondegenerate} in Theorem \ref{thm:mss} with the condition \eqref{eq:constable1} (see Theorem \ref{thm:bistability}).  The main idea is to show
the Jacobian matrices at two steady states are similar to column diagonally dominant matrices (see Lemmas \ref{lm:1stable}-\ref{lm:similar5}). Then, we can conclude
bistability by Theorem \ref{thm:matrix2}.

\begin{lemma}\label{lm:1stable}
For any integer $m\geq 2$,  and for any odd integer $n\geq 3$, %$K^{0}_{m,n}$ admits two nondegenerate steady states.
if the  rate constants $r_{n+1}, r_{n+3}, \ldots, r_{2n}$ satisfy the condition \eqref{eq:con0}, and
%\begin{equation}\label{eq:con0}
%r_{n+1}=0, \; \;r_{n+i}=0, \;\; \text{for }\; 3\leq i \leq n,
%\end{equation}
if the rate constants $r_1, r_n, r_{n+2}$ satisfy the inequality \eqref{eq:constable1},
%\begin{equation}\label{eq:constable1}
%(r_1+r_n)r_{n+2}~>~(m-1)r_1r_n,
%(r_1+r_n)r_{n+2}-(m-1)r_1r_n>0,
%\end{equation}
then for $x^{(1)}=(1,1,\ldots,1)$, the matrix $J|_{x=x^{(1)}}$ is similar to a column diagonally dominant matrix.
\end{lemma}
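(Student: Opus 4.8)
The plan is to prove the lemma by exhibiting one explicit diagonal similarity: I will produce a positive diagonal matrix $D$ for which $D^{-1}\bigl(J|_{x=x^{(1)}}\bigr)D$ is column diagonally dominant with negative diagonal entries, and the statement follows immediately. First I would write down $J_0:=J|_{x=x^{(1)}}$ under \eqref{eq:con0}: substituting $x_1=\cdots=x_n=1$, $r_{n+1}=0$ and $r_{n+i}=0$ for $3\le i\le n$ into \eqref{eq:jac} gives the tridiagonal matrix with diagonal $(-(r_1+r_n),\,-(r_1+r_2+r_{n+2}),\,-(r_2+r_3),\,\ldots,\,-(r_{n-2}+r_{n-1}),\,-r_{n-1})$, with sub- and super-diagonal entries $-r_1,-r_2,\ldots,-r_{n-1}$, together with a single extra entry $(J_0)_{n1}=mr_n$ in the lower-left corner. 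The tridiagonal part of $J_0$ is symmetric and weakly (row and column) diagonally dominant, with strict dominance occurring only in the first two positions, where the slacks are $r_n$ and $r_{n+2}$; the sole obstruction to $J_0$ itself being column diagonally dominant is the corner entry $mr_n$, which over-loads column $1$ by $(m-1)r_n$.

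The key step is the choice $D=\diag(r_1,\,r_1+r_{n+2},\,r_1+r_{n+2},\,\ldots,\,r_1+r_{n+2})$, that is, contract the first coordinate relative to all the others by the factor $r_1/(r_1+r_{n+2})$; note $D$ is positive since $r_1,r_{n+2}>0$. Then $\widetilde J:=D^{-1}J_0D$ has the same (manifestly negative) diagonal as $J_0$; the only off-diagonal entries of $\widetilde J$ that differ from those of $J_0$ are $\widetilde a_{12}=-(r_1+r_{n+2})$, $\widetilde a_{21}=-r_1^2/(r_1+r_{n+2})$ and $\widetilde a_{n1}=mr_1r_n/(r_1+r_{n+2})$, while every other off-diagonal entry (the rest of the tridiagonal band) is unchanged, because those entries $(i,j)$ have $i,j\ge 2$ and hence $d_i=d_j$. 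I then verify column dominance column by column. Columns $3,\ldots,n$ are identical to those of $J_0$ and there the tridiagonal part is already balanced ($r_{j-1}+r_j\ge r_{j-1}+r_j$, and $r_{n-1}\ge r_{n-1}$ in the last column). Column $2$ holds with equality: $|\widetilde a_{22}|=r_1+r_2+r_{n+2}\ge|\widetilde a_{12}|+|\widetilde a_{32}|=(r_1+r_{n+2})+r_2$, where the factor $d_2/d_1=(r_1+r_{n+2})/r_1$ was chosen precisely so that $|\widetilde a_{12}|$ absorbs the diagonal slack of column $2$. Finally, for column $1$, $|\widetilde a_{11}|=r_1+r_n\ge|\widetilde a_{21}|+|\widetilde a_{n1}|=(r_1^2+mr_1r_n)/(r_1+r_{n+2})$, which after clearing the denominator is exactly $(r_1+r_n)r_{n+2}\ge(m-1)r_1r_n$, i.e. hypothesis \eqref{eq:constable1} (in fact strictly). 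Hence $\widetilde J$ is column diagonally dominant with negative diagonal entries, so $J|_{x=x^{(1)}}$ is similar to it.

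The only genuine difficulty is locating $D$: once one notices that the tridiagonal part of $J_0$ is symmetric and that its one unit of ``excess'' slack sits in column $2$ (contributed by the inflow rate $r_{n+2}$), it becomes natural to route that slack over to column $1$ by contracting the first coordinate, and the precise amount of contraction that restores dominance of column $1$ is governed by exactly the quantity appearing in \eqref{eq:constable1}; everything else is a one-line check per column, independent of the parity of $n$. I expect no issue with the degenerate-eigenvalue caveat of Theorem~\ref{thm:matrix2} either, since by Lemma~\ref{lm:J0} we have $\det J_0=r_2\cdots r_{n-1}\bigl((m-1)r_1r_n-(r_1+r_n)r_{n+2}\bigr)\ne 0$ under \eqref{eq:constable1}, so $J|_{x=x^{(1)}}$ is in fact nonsingular and all its eigenvalues have negative real parts.
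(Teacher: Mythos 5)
Your proof is correct and uses essentially the same argument as the paper: conjugating by a diagonal matrix that rescales the first coordinate relative to the others by the factor $(r_1+r_{n+2})/r_1$ (the paper writes this as $D=\diag(1+\tfrac{r_{n+2}}{r_1x_1},1,\ldots,1)$, which at $x^{(1)}$ induces the identical similarity), so that column $2$ becomes balanced and column $1$ becomes dominant exactly under \eqref{eq:constable1}. Your column-by-column verification and the reduction of column $1$ to $(r_1+r_n)r_{n+2}\ge(m-1)r_1r_n$ match the paper's computation.
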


\begin{proof}
Since the condition  \eqref{eq:con0} holds, for any $r_1, \ldots, r_n, r_{n+2}$,
the Jacobian matrix $J$ is as follows:
\begin{equation*}
{\fontsize{3pt}{2pt}J~=~\begin{bmatrix}
-r_1x_2-r_n&-r_1x_1&\cdots&0&0\\
-r_1x_2&-r_1x_1-r_2x_3-r_{n+2}&\cdots&\vdots&\vdots\\
0&-r_2x_3&\ddots&0&0\\
\vdots&0&\ddots&-r_{n-2}x_{n-2}&0\\
0&\vdots&\ddots&-r_{n-2}x_{n-2}-r_{n-1}x_n&-r_{n-1}x_{n-1}\\
mr_n&0&\cdots&-r_{n-1}x_n&-r_{n-1}x_{n-1}
\end{bmatrix}}.
\end{equation*}
Let $\alpha=1+\frac{r_{n+2}}{r_1x_1}$, and
let $D$ be the diagonal matrix $\diag(\alpha,1,\ldots,1)$. Note that the matrix $\tilde J:=DJD^{-1}$ is equal to
\begin{equation*}
{\fontsize{3pt}{2pt}
\tilde J ~=~
\begin{bmatrix}
-r_1x_2-r_n&-r_1x_1\alpha&\cdots&0&0\\
-\frac{r_1x_2}{\alpha}&-r_1x_1-r_2x_3-r_{n+2}&\cdots&\vdots&\vdots\\
0&-r_2x_3&\ddots&0&0\\
\vdots&0&\ddots&-r_{n-2}x_{n-2}&0\\
0&\vdots&\ddots&-r_{n-2}x_{n-2}-r_{n-1}x_n&-r_{n-1}x_{n-1}\\
\frac{mr_n}{\alpha}&0&\cdots&-r_{n-1}x_n&-r_{n-1}x_{n-1}
\end{bmatrix}}.
\end{equation*}
We denote by $a_{ij}$ the $(i, j)$-entry in $\tilde J$.
Clearly, for $i>2$, we have $|a_{ii}|=\sum_{j\neq i}|a_{ij}|$.
For $i=2$, by $\alpha=1+\frac{r_{n+2}}{r_1x_1}$, we have
\[|a_{22}|~=~r_1x_1+r_2x_3+r_{n+2}~=~\alpha r_1x_1 + r_2x_3 ~=~ \sum_{j\neq 2}|a_{2j}|.\]
Note that the inequality $|a_{11}|>\sum_{j\neq 1}|a_{1j}|$ is equivalent to
\begin{equation}\label{sec4-eq0}
\frac{(r_1x_2+r_n)r_{n+2}}{x_1}~>~(m-1)r_1r_n.
\end{equation}
For $x=x^{(1)}=(1,1,\ldots,1)$, the inequality \eqref{sec4-eq0} is exactly the inequality \eqref{eq:constable1}.
%For $x^{(2)}=(\delta_1,\delta_2,\ldots,\delta_n)$, the inequality \eqref{sec4-eq0} is exactly \eqref{eq:constable2} as desired.
%\eqref{eq:constable1} is equivalent to
%\[r_1^2+r_1(r_{n}+r_{n+2})+r_nr_{n+2}~>~r_1^2+mr_1r_n ~\Leftrightarrow ~(r_1+r_n)(r_1+r_{n+2})~>~r_1(r_1+mr_n).\]
%So, for $i=1$, if \eqref{eq:constable1} holds, we have
%\[|a_{11}|~=~r_1+r_n~>\frac{r_1(r_1+mr_n)}{r_1+r_{n+2}}~=~\frac{r_1}{\alpha} + \frac{mr_n}{\alpha} ~=~ \sum_{j\neq 1}|a_{1j}|.\]
\end{proof}

\begin{remark}
Similarly to Lemma \ref{lm:1stable}, one can prove
if the rate constants satisfy the inequality \eqref{eq:constable2},
%\begin{equation}\label{eq:constable2}
%(r_1+r_n)r_{n+2}~<~(m-1)r_1r_n,
%(r_1+r_n)r_{n+2}-(m-1)r_1r_n>0,
%\end{equation}
then for $x^{(2)}=(\delta_1,\delta_2,\ldots,\delta_n)$ \eqref{eq:2solutions} stated in Lemma \ref{lm}, $J|_{x=x^{(2)}}$ is similar to a column diagonally dominant matrix.
\end{remark}

\begin{lemma}\label{lm:similar3}
For any  integer $m\geq 2$, and for $n=3$, %$K^{0}_{m,n}$ admits two nondegenerate steady states.
if $r_4=r_6=\epsilon$, and if  $\epsilon>0$ is sufficiently small,
then for any positive rate constants $r_1, r_2, r_3, r_5$, and for any positive numbers $\hat \xi_1, \hat \xi_2, \hat \xi_3$,  the matrix $J|_{x=(\hat \xi_1,\frac{\epsilon \hat \xi_{2}}{\hat \xi_3},\frac{\hat \xi_3}{\epsilon})}$ is similar to a column diagonally dominant matrix.%, where $\hat \xi_1, \hat \xi_2$ and $\hat \xi_3$ can be any positive numbers.

\end{lemma}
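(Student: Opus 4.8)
The plan is to imitate the strategy of Lemma \ref{lm:1stable}: find an explicit diagonal matrix $D$ so that $\tilde J := D\, J|_{x=\hat x}\, D^{-1}$ becomes column diagonally dominant with negative diagonal entries, where $\hat x = (\hat\xi_1, \epsilon\hat\xi_2/\hat\xi_3, \hat\xi_3/\epsilon)$. First I would write out $J$ explicitly at this point. For $n=3$ the generic Jacobian \eqref{eq:jac} with $r_4=r_6=\epsilon$ has the form
\begin{equation*}
J|_{x=\hat x}~=~\begin{bmatrix}
-r_1x_2-r_3-\epsilon & -r_1x_1 & 0\\
-r_1x_2 & -r_1x_1-r_2x_3-r_5 & -r_2x_2\\
mr_3 & 0 & -r_2x_2-\epsilon
\end{bmatrix},
\end{equation*}
where $(x_1,x_2,x_3)=(\hat\xi_1,\epsilon\hat\xi_2/\hat\xi_3,\hat\xi_3/\epsilon)$; note $x_2\to 0$ and $x_3\to\infty$ as $\epsilon\to0$, but $r_2x_2 = r_2\epsilon\hat\xi_2/\hat\xi_3\to 0$ while $r_1x_1$ and $r_2x_3 = r_2\hat\xi_3/\epsilon\to\infty$ stay controlled in the right places. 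The diagonal entries are all negative for any $\epsilon>0$, so the only thing to arrange is column dominance.

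Next I would try a conjugation by $D=\diag(\alpha,\beta,1)$ with $\alpha,\beta$ chosen (possibly depending on $\epsilon$ and the $r_i$) to balance the three columns. Conjugation sends the $(i,j)$-entry $a_{ij}$ to $(d_i/d_j)a_{ij}$. Column $3$ has off-diagonal entries $0$ and $(\beta/1)\cdot(-r_2x_2)$ wait — actually the $(2,3)$ entry scales to $(\beta)(-r_2x_2)$ and this must be dominated by $|a_{33}|=r_2x_2+\epsilon$; since $r_2x_2\to0$, picking $\beta$ a fixed constant (say $\beta=1$) already works once $\epsilon$ is small, because then $r_2x_2 < r_2x_2+\epsilon$. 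Column $1$ has off-diagonal entries scaling to $(\beta/\alpha)(-r_1x_2)$ and $(1/\alpha)(mr_3)$; we need $\frac{1}{\alpha}(r_1x_2 + mr_3) \le r_1x_2+r_3+\epsilon$, so choosing $\alpha>1$ of size roughly $m$ (independent of $\epsilon$) handles this as long as $\epsilon$ is small relative to, e.g., $r_3$. Column $2$ has off-diagonal entries scaling to $\alpha(-r_1x_1)$ and $(1/\beta)(-r_2x_2)$; we need $\alpha r_1x_1 + \frac{1}{\beta}r_2x_2 \le r_1x_1+r_2x_3+r_5$. Here $r_2x_3 = r_2\hat\xi_3/\epsilon\to\infty$, so this dominates any fixed multiple $\alpha r_1x_1$ once $\epsilon$ is small enough, and $r_2x_2\to0$. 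So all three column-dominance inequalities hold simultaneously provided $\epsilon$ is small enough (the threshold depending on the $r_i$ and the $\hat\xi_i$, which is allowed by the statement).

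The main obstacle I anticipate is that $\hat\xi_1,\hat\xi_2,\hat\xi_3$ are \emph{arbitrary} positive numbers, not the specific solution $\xi$ produced in Lemma \ref{lm:thirdn3}, so the "smallness of $\epsilon$" must be uniform only in $\epsilon$ and may depend on all the other data; I should be careful to state the quantifier order correctly (fix $r_i$ and $\hat\xi_j$ first, then choose $\epsilon$), which matches the lemma's phrasing. A secondary subtlety is making sure the choice of $\alpha$ and the required upper bound on $\epsilon$ are genuinely compatible — i.e. that $\alpha$ can be chosen to satisfy the column-$1$ inequality while the column-$2$ inequality only becomes easier as $\epsilon\to 0$; since column $2$'s slack $r_2x_3$ blows up like $1/\epsilon$ and columns $1,3$ only need $\epsilon$ below a fixed constant, there is no circular dependence. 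Finally, with $\tilde J$ column diagonally dominant and all diagonal entries negative, Lemma \ref{lm:matrix2} (or Theorem \ref{thm:matrix2} applied to $J|_{x=\hat x}\sim\tilde J$) gives that every nonzero eigenvalue has negative real part; combined with $\det J|_{x=\hat x}\neq 0$ (nondegeneracy, from Lemma \ref{lm:jac2}) this yields local asymptotic stability, which is how this lemma will be used in the proof of Theorem \ref{thm:bistability}.
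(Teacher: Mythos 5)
Your proposed conjugation does not work as written, and the root cause is an error in the Jacobian: for $n=3$ the $(3,2)$ entry of $J$ is $\partial f_3/\partial x_2=-r_2x_3$, not $0$ (and not $-r_2x_2$, which is the $(2,3)$ entry). At the point in question $r_2x_3=r_2\hat\xi_3/\epsilon\to\infty$, and this entry sits in \emph{column}~$2$. With $D=\diag(\alpha,\beta,1)$ the column-$2$ dominance condition is therefore
\[
\tfrac{\alpha}{\beta}\,r_1\hat\xi_1+\tfrac{1}{\beta}\,r_2\tfrac{\hat\xi_3}{\epsilon}\ \le\ r_1\hat\xi_1+r_2\tfrac{\hat\xi_3}{\epsilon}+r_5 .
\]
With your choice $\beta=1$ the two $1/\epsilon$ terms cancel exactly and you are left with $(\alpha-1)r_1\hat\xi_1\le r_5$, whereas column~$1$ forces $\alpha\ge m$ in the limit $\epsilon\to0$; these are incompatible whenever $(m-1)r_1\hat\xi_1>r_5$, and the lemma must hold for \emph{arbitrary} positive $\hat\xi_1$. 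So your ``no circular dependence'' claim fails precisely because the quantity $r_2x_3$ that you counted as slack in column~$2$ also appears in that column's off-diagonal sum.

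The gap is repairable within your framework: taking $\beta$ strictly greater than $1$ (but at most $1+\hat\xi_3/(r_2\hat\xi_2)$, which is what column~$3$ tolerates) turns the column-$2$ margin into $(1-1/\beta)\,r_2\hat\xi_3/\epsilon+O(1)\to+\infty$, after which any fixed $\alpha>m$ handles column~$1$ for small $\epsilon$. The paper organizes the bookkeeping differently: it takes $D=\diag(d_1,1,d_3)$ and \emph{solves} a linear system so that columns~$2$ and~$3$ are exactly balanced (there $d_1$ depends on $\epsilon$ and in fact grows like $1/\epsilon$), leaving only the column-$1$ inequality, which reduces to a polynomial inequality in $\epsilon$ whose left-hand side vanishes at $\epsilon=0$ while the right-hand side stays positive. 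Your closing remarks about deducing stability via Lemma~\ref{lm:matrix2} together with nondegeneracy match how the paper later uses this lemma, but they are not part of its statement.
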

\begin{proof}
Let $D=\diag(d_1,1,d_3)$, where $d_1$ and $d_3$ satisfy the equalities:
\begin{equation}\label{sec4-eq4}
\begin{cases}
d_1r_{1}x_{1}+d_3r_{2}x_3~=~r_{1}x_{1}+r_{2}x_3+r_{5}\\
\frac{1}{d_3}r_{2}x_{2}~=~r_{2}x_{2}+r_{6}
\end{cases}.
\end{equation}
We solve for $d_1$ and $d_3$  from \eqref{sec4-eq4} over ${\mathbb Q}(r, x)$:
\begin{align}\label{eq:dpp3}
\begin{cases}
d_1 ~=~ \frac{r_1r_2x_1x_2+r_1r_6x_1+r_2r_5x_2+r_2r_6x_3+r_5r_6}{r_1x_1(r_2x_2+r_6)},  \\
d_3 ~=~ \frac{r_2x_2}{r_2x_2+r_6}.
\end{cases}
\end{align}
Notice that the matrix $\tilde J~:=~DJD^{-1}$ is equal to
\begin{equation*}
\begin{bmatrix}
\begin{matrix}
-r_1x_2-r_3-r_{4}&-d_1r_1x_1&0\\
-\frac{1}{d_1}r_1x_2&-r_1x_1-r_2x_3-r_{5}&-\frac{1}{d_3}r_2x_2\\
\frac{d_3}{d_1}mr_3 & -d_3r_2x_3 & -r_2x_2-r_6
\end{matrix}
\end{bmatrix}.
\end{equation*}
We denote by $a_{ij}$ the $(i, j)$-entry in $\tilde J$.
By \eqref{sec4-eq4},  for $i=2, 3$, we have $|a_{ii}|=\sum_{j\neq i}|a_{ij}|$.
By Definition \ref{def:dom},  in order to make $\tilde J$ to be column diagonally dominant, we only need to ensure $|a_{11}|\leq \sum_{j\neq 1}|a_{1j}|$. That means, it is sufficient to show that
for $x=(\hat \xi_1,\frac{\epsilon \hat \xi_{2}}{\hat \xi_3},\frac{\hat \xi_3}{\epsilon})$, and
for $r_4=r_6=\epsilon$, the inequality below is true if $\epsilon>0$ is sufficiently small:
\begin{equation}\label{sec4-eq5}
\frac{d_3}{d_{1}}mr_{3}+\frac{1}{d_{1}}r_{1}x_2~\le ~r_{1}x_{2}+r_{3}+r_{4}.
\end{equation}
%In fact, we solve $d_1$ and $d_3$ in terms of $r$ and $x$ from \eqref{sec4-eq4},  and
In fact, we substitute \eqref{eq:dpp3} into the inequality \eqref{sec4-eq5} and obtain
\begin{equation}\label{sec4-eq6}
r_1x_1x_2(mr_2r_3+r_1(r_2x_2+r_6))\le(r_2r_6x_3+(r_1x_1+r_5)(r_2x_2+r_6))(r_1x_2+r_3+r_4).
\end{equation}
When $x=(\hat \xi_1,\frac{\epsilon \hat \xi_{2}}{\hat \xi_3},\frac{\hat \xi_3}{\epsilon})$ and $r_{4}=r_{6}=\epsilon$,  the inequality \eqref{sec4-eq6} is
\begin{equation}\label{sec4-eq7}
r_1\hat \xi_1\frac{\hat \xi_{2}\epsilon}{\hat\xi_3}(mr_2r_3+r_1r_2\frac{\hat\xi_{2}\epsilon}{\hat\xi_3}+\epsilon)\le(r_2\hat \xi_3+(r_1\hat \xi_1+r_5)(r_2\frac{\hat \xi_{2}\epsilon}{\hat\xi_3}+\epsilon))(r_3+r_1\frac{\hat \xi_{2}\epsilon}{\hat\xi_3}+\epsilon).
\end{equation}
Note that both sides of \eqref{sec4-eq7} are quadratic functions in $\epsilon$. Note also,  at $\epsilon=0$, the function on the left-hand side evaluates to $0$, while  the one on the right-hand side is positive. So for sufficiently small $\epsilon>0$,
the inequality \eqref{sec4-eq7} holds.
\end{proof}

\begin{lemma}\label{lm:similar5}
For any  integer $m\geq 2$,  and for  any odd integer $n>3$,
%$K^{0}_{m,n}$ admits two nondegenerate steady states.
if $r_{n+1}=r_{n+3}=\ldots=r_{2n}=\epsilon$, and
if  $\epsilon>0$ is sufficiently small,
then for any positive rate constants $r_1, \ldots, r_{n}, r_{n+2}$,  and for any positive numbers $\hat \xi_1, \ldots, \hat \xi_n$, the matrix
$$J|_{x=(\hat \xi_1, \ldots, \hat \xi_{n-2}, \frac{\epsilon \hat \xi_{n-1}}{\hat \xi_n},
 \frac{\hat \xi_n}{\epsilon})}$$ is similar to a column diagonally dominant matrix.
\end{lemma}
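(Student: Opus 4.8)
The plan is to argue exactly as in Lemma~\ref{lm:similar3}: produce an explicit positive diagonal matrix $D=\diag(d_1,\dots,d_n)$ so that $\tilde J:=DJD^{-1}=(\tilde a_{ij})$ has columns $2,\dots,n$ \emph{exactly} balanced (that is, $|\tilde a_{jj}|=\sum_{i\ne j}|\tilde a_{ij}|$ for every $j\ge2$), and then check that the remaining column, column~$1$, is column diagonally dominant for all sufficiently small $\epsilon>0$. Substituting $x_k=\hat\xi_k$ for $k\le n-2$, $x_{n-1}=\epsilon\hat\xi_{n-1}/\hat\xi_n$, $x_n=\hat\xi_n/\epsilon$ and $r_{n+1}=r_{n+3}=\dots=r_{2n}=\epsilon$, the matrix $J$ is tridiagonal apart from the corner entry $(n,1)=mr_n>0$; all off-diagonal entries are $\le0$ except that corner, all diagonal entries are $<0$, the entries $(n-1,n-1)$ and $(n,n-1)$ are of order $\epsilon^{-1}$ (they contain $r_{n-1}x_n=r_{n-1}\hat\xi_n/\epsilon$), and every other entry is $O(1)$ or $O(\epsilon)$.

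To build $D$, normalize $d_n=1$ and solve the balancing equations recursively from column $n$ downward. Column $n$ gives the \emph{constant} ratio $d_{n-1}/d_n=1+r_{2n}/(r_{n-1}x_{n-1})=1+\hat\xi_n/(r_{n-1}\hat\xi_{n-1})=:\rho_{n-1}>1$; for $3\le j\le n-1$, column $j$ determines $d_{j-1}/d_j$ from the already known $d_{j+1}/d_j$ via
\[
\frac{d_{j-1}}{d_j}\,r_{j-1}x_{j-1}+\frac{d_{j+1}}{d_j}\,r_{j}x_{j+1}=r_{j-1}x_{j-1}+r_{j}x_{j+1}+r_{n+j};
\]
and column $2$ gives $\frac{d_1}{d_2}\,r_1x_1+\frac{d_3}{d_2}\,r_2x_3=r_1x_1+r_2x_3+r_{n+2}$. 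One must then check that this recursion yields \emph{positive} $d_i$. Tracking the behaviour as $\epsilon\to0$ (with $\hat\xi$ and $r$ fixed): $d_{n-2}/d_{n-1}\sim C_{n-2}/\epsilon$ with $C_{n-2}=r_{n-1}\bigl(1-\rho_{n-1}^{-1}\bigr)\hat\xi_n/(r_{n-2}\hat\xi_{n-2})>0$ --- here $r_{n-1}x_n$ occurs on both sides of the column-$(n-1)$ equation but is only partially cancelled, precisely because $d_n/d_{n-1}=\rho_{n-1}^{-1}<1$, and this is exactly what makes the right-hand side positive; for $j=n-2,\dots,3$ one gets $d_{j-1}/d_j=1+O(\epsilon)$; and $d_1/d_2\to1+r_{n+2}/(r_1\hat\xi_1)=:\rho_1>1$. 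So all $d_i>0$ for small $\epsilon$, and in
\[
\frac{d_n}{d_1}=\prod_{k=2}^{n}\frac{d_k}{d_{k-1}}=\rho_1^{-1}\Bigl(\prod_{k=3}^{n-2}\bigl(1+O(\epsilon)\bigr)\Bigr)\frac{d_{n-1}}{d_{n-2}}\,\rho_{n-1}^{-1}
\]
the only factor not tending to a finite nonzero constant is $d_{n-1}/d_{n-2}=(d_{n-2}/d_{n-1})^{-1}\sim\epsilon/C_{n-2}$; hence $d_n/d_1=O(\epsilon)$.

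By construction columns $2,\dots,n$ of $\tilde J$ are balanced, so only column~$1$ is in question; its entries are the unchanged diagonal $-r_1x_2-r_n-\epsilon$, the row-$2$ entry $-(d_2/d_1)r_1x_2$, and the row-$n$ entry $(d_n/d_1)mr_n$. Column~$1$ is column diagonally dominant iff
\[
r_1x_2+r_n+\epsilon\ \ge\ \frac{d_2}{d_1}\,r_1x_2+\frac{d_n}{d_1}\,mr_n,
\]
and since $x_2=\hat\xi_2$ is a positive constant, as $\epsilon\to0$ the left side tends to $r_1\hat\xi_2+r_n$ while the right side tends to $\rho_1^{-1}r_1\hat\xi_2<r_1\hat\xi_2$ (the term $(d_n/d_1)mr_n\to0$); the inequality is thus strict for all small enough $\epsilon>0$. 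Hence $\tilde J$ is column diagonally dominant and $J$ is similar to it. I expect the recursion's positivity --- in particular the column-$(n-1)$ step, where one $O(\epsilon^{-1})$ quantity is subtracted from another and one must exploit $d_n/d_{n-1}<1$ --- together with the precise bookkeeping of which ratios blow up, vanish, or stabilize so that $d_n/d_1=O(\epsilon)$, to be the main technical point.
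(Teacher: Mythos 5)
Your proof is correct and follows the same overall strategy as the paper's: conjugate $J$ by a positive diagonal matrix $D$ so that every column except the first is exactly balanced, then verify column~$1$ in the limit $\epsilon\to0$ using $d_2/d_1<1$ and $d_n/d_1\to0$. The only real difference is the choice of $D$. The paper takes $D=\diag(d_1,1,\ldots,1,d_{n-1},d_n)$ with just three non-unit entries, exactly balancing columns $2$, $n-1$ and $n$; columns $3,\ldots,n-3$ then come for free (they are already weakly dominant in $J$ itself, with slack $r_{n+i}=\epsilon$), and column $n-2$ is handled by the single observation $d_{n-1}<1$. Your full recursion balances all of columns $2,\ldots,n$ exactly, which is more uniform but creates the positivity bookkeeping you flag as the main technical point. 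That bookkeeping is in fact easier than you suggest: setting $t_j:=d_{j-1}/d_j$, the column-$j$ balancing equation reads $t_j=1+\left(r_jx_{j+1}(1-t_{j+1}^{-1})+r_{n+j}\right)/(r_{j-1}x_{j-1})$, and since $t_n=1+r_{2n}/(r_{n-1}x_{n-1})>1$, a downward induction gives $t_j>1$ for every $j$ without any tracking of competing $O(\epsilon^{-1})$ terms; positivity of all $d_i$ is automatic, and $d_n/d_1=\prod_{k=2}^{n}t_k^{-1}\le t_{n-1}^{-1}=O(\epsilon)$ follows from the single blow-up $t_{n-1}\sim C_{n-2}/\epsilon$ exactly as you describe. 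So both arguments go through; the paper's three-entry $D$ simply avoids the recursion altogether.
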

\begin{proof}
Let $D=\diag(d_1,1,\ldots,1,d_{n-1},d_n)$,  where $d_1,d_{n-1}$, and $d_n$ satisfy the equalities:
\begin{equation}\label{sec4-eq}
\begin{cases}
d_1r_1x_1=r_1x_1+r_{n+2}\\
\frac{1}{d_{n-1}}r_{n-2}x_{n-2}+\frac{d_n}{d_{n-1}}r_{n-1}x_n=r_{n-2}x_{n-2}+r_{n-1}x_n+r_{2n-1}\\
\frac{d_{n-1}}{d_n}r_{n-1}x_{n-1}=r_{n-1}x_{n-1}+r_{2n}
\end{cases}.
\end{equation}
We solve for $d_1,d_{n-1}$, and $d_n$ from \eqref{sec4-eq} over ${\mathbb Q}(r, x)$:
\begin{align}\label{eq:dpp5}
{\footnotesize
\begin{cases}
 d_1 ~ =~ \frac{r_1x_1+r_{n+1}}{r_1x_1}, \\
d_{n-1} ~=~ \frac{r_{n-1}r_{n-2}x_{n-1}x_{n-2}+r_{2n}r_{n-2}x_{n-2}}{r_{n-1}r_{n-2}x_{n-1}x_{n-2}+r_{2n}r_{n-2}x_{n-2}+r_{2n}r_{n-1}x_n+r_{2n-1}r_{n-1}x_{n-1}+r_{2n}r_{2n-1}}, \\
 d_n ~=~ \frac{r_{n-1}r_{n-2}x_{n-1}x_{n-2}}{r_{n-1}r_{n-2}x_{n-1}x_{n-2}+r_{2n}r_{n-1}x_n+r_{2n}r_{n-2}x_{n-2}+r_{2n-1}r_{n-1}x_{n-1}+r_{2n}r_{2n-1}}.
 \end{cases}
 }
\end{align}
 Notice that $\tilde J:= DJD^{-1}$ is equal to
 \begin{equation*}
%J:=
{\tiny
\begin{bmatrix}
-r_1x_2-r_n-r_{n+1}&-d_1r_1x_1&\cdots&0&0\\
-\frac{1}{d_1}r_1x_2&-r_1x_1-r_2x_3-r_{n+2}&\cdots&\vdots&\vdots\\
0&-r_2x_3&\ddots&0&0\\
\vdots&0&\ddots&-\frac{1}{d_{n-1}}r_{n-2}x_{n-2}&0\\
0&\vdots&\ddots&-r_{n-2}x_{n-2}-r_{n-1}x_n-r_{2n-1}&-\frac{d_{n-1}}{d_n}r_{n-1}x_{n-1}\\
-\frac{d_n}{d_1}mr_n&0&\cdots&-\frac{d_n}{d_{n-1}}r_{n-1}x_n&-r_{n-1}x_{n-1}-r_{2n}
\end{bmatrix}.
}
\end{equation*}
We denote by $a_{ij}$ the $(i, j)$-entry in $\tilde J$.
Clearly, for any $2<i<n-2$, $|a_{ii}|=\sum_{j\neq i}|a_{ij}|$.
By \eqref{sec4-eq},  for $i=2, n-1, n$, we have $|a_{ii}|=\sum_{j\neq i}|a_{ij}|$.
 % {\color{red}add the matrix here}
 By Definition \ref{def:dom},  in order to make $\tilde J$ to be column diagonally dominant, we only need to make sure
 $|a_{ii}|\leq\sum_{j\neq i}|a_{ij}|$ for $i=1$ and $i=n-2$. That means that it is sufficient to show that for $x=(\hat \xi_1, \ldots, \hat \xi_{n-2}, \frac{\epsilon \hat \xi_{n-1}}{\hat \xi_n},\frac{\hat \xi_n}{\epsilon})$, and  for $r_{n+1}=r_{n+3}=\cdots=r_{2n}=\epsilon$,
 we have the inequalities below if $\epsilon>0$ is sufficiently small:
\begin{equation}\label{sec4-eq1}
\begin{cases}
\frac{1}{d_{1}}r_{1}x_{2}+\frac{d_n}{d_{1}}mr_{n}~\le~ r_{1}x_{2}+r_{n}+r_{n+1} \\
r_{n-3}x_{n-3}+d_{n-1}r_{n-2}x_{n-1}~\le~ r_{n-3}x_{n-3}+r_{n-2}x_{n-1}+r_{2n-2}
\end{cases}.
\end{equation}
By \eqref{eq:dpp5},  $d_{n-1}<1$. So, the second inequality in \eqref{sec4-eq1} holds for any positive $r$ and $x$.
 We substitute \eqref{eq:dpp5} into the first inequality in \eqref{sec4-eq1}. Then we have
\begin{equation}\label{sec4-eq2}
\begin{split}
mr_1r_{n-1}r_{n-2}r_{n}x_1x_{n-2}x_{n-1}~\le~&(r_1r_nx_1+r_1r_{n+1}x_1+r_{1}r_{n+2}x_{2}+r_{n}r_{n+2}\\
&+r_{n+1}r_{n+2})(r_{n-2}r_{n-1}x_{n-2}x_{n-1}+r_{n-1}r_{2n-1}x_{n-1}\\
&+r_{2n}(r_{n-2}x_{n-2}+r_{n-1}x_n+r_{2n-1})).
\end{split}
\end{equation}
For $x=(\hat \xi_1, \ldots, \hat \xi_{n-2}, \frac{\epsilon \hat \xi_{n-1}}{\hat \xi_n},\frac{\hat \xi_n}{\epsilon})$, and for $r_{n+1}=r_{n+3}=\cdots=r_{2n}=\epsilon$,  the inequality \eqref{sec4-eq2} is
\begin{equation}\label{sec4-eq3}
\begin{split}
(mr_1r_{n-1}r_{n-2}r_{n}\hat\xi_1\hat\xi_{n-2}\frac{\hat\xi_{n-1}}{\hat\xi_{n}})\epsilon~\le~&(r_1r_n\hat\xi_1+r_1r_{n+2}\hat\xi_2+r_{n}r_{n+2}+(r_{1}\hat\xi_1+r_{n+2})\epsilon)\\
&(r_{n-1}\hat\xi_{n}+(r_{n-1}\frac{\hat\xi_{n-1}}{\hat\xi_{n}}+1)(r_{n-2}\hat\xi_{n-2}\epsilon+\epsilon^2)).
\end{split}
\end{equation}
Note that when $\epsilon=0$, the left-hand side of \eqref{sec4-eq3} is zero, and the right-hand side is positive. So, the inequality \eqref{sec4-eq3} clearly holds for sufficiently small $\epsilon>0$.
\end{proof}

{\bf Proof of Theorem \ref{thm:bistability}.}
%\begin{proof}
Let $r_{n+1}=r_{n+3}=\ldots=r_{2n}=\epsilon$.
For $n=3$, choose the rate constants $r_1, r_2, r_3, r_5$ as in \eqref{eq:valuen3}.
For $n>3$, choose the rate constants $r_1, \ldots, r_n,  r_{n+2}$ as in \eqref{eq:value}.
By the proof of Theorem \ref{thm:mss},  for these rate constants, $\widetilde{K}_{m,n}$ has three nondegenerate positive steady states $\hat x^{(i)}$ $(i=1, 2, 3)$ if $\epsilon$ is a sufficiently small positive number, where
$\hat x^{(3)}$ has the form
$(\hat \xi_1, \ldots, \hat \xi_{n-2}, \frac{\epsilon \hat \xi_{n-1}}{\hat \xi_n},\frac{\hat \xi_n}{\epsilon})$, and
$\hat x^{(1)}=(1,1,\ldots,1)$. By Lemmas \ref{lm:similar3}--\ref{lm:similar5} and Theorem \ref{thm:matrix2},  all non-zero eigenvalues of $J|_{x=\hat x^{(3)}}$ have negative real parts.
Note our choice of rate constants also satisfies the inequality \eqref{eq:constable1}. So, by Theorem \ref{thm:matrix2} and Lemma \ref{lm:1stable},
when $\epsilon=0$, all non-zero eigenvalues of $J|_{x=\hat x^{(1)}}$ have negative real parts.
Note that the eigenvalues of a matrix vary continuously under continuous perturbations of entries.
So,  if $\epsilon$ is a sufficiently small positive number, all non-zero eigenvalues of $J|_{x=\hat x^{(1)}}$ also have negative real parts. %That means $\hat x^{(1)}$ is exponentially stable.
By the proof of Theorem \ref{thm:mss}, $\det J|_{x=\hat x^{(i)}}\neq 0, \;\text{for}\;i=1,3$. So both $\hat x^{(1)}$ and $\hat x^{(3)}$ are locally asymptotically stable.
%{\color{red}(Check the definition of exponentially stability.)}
%\end{proof}
%{\color{blue}

\subsection{Non-empty open region for bistability}  {\bf Proof of Theorem \ref{thm:region}.}
The case for $n=3$ is obvious. For any odd integer $n>3$, by the inequality \eqref{eq:constable1}, the inequality \eqref{eq:conie-eq12} holds if and only if
\begin{equation}\label{eq:prop}
(m-1)r_1r_{i-1}~>~m((r_1+r_{n})r_{n+2}-(m-1)r_1r_{n}),\;\;i=4,\ldots,n-1.
\end{equation}
And by the inequalities \eqref{eq:sec3-eq12}, the inequality \eqref{eq:prop} holds if and only if
\begin{equation*}\label{eq2:prop}
(m-1)r_1r_{n-2}~>~m((r_1+r_{n})r_{n+2}-(m-1)r_1r_{n}).
\end{equation*}
Then, it is easy to see that the open set in ${\mathbb R}_{>0}^{n+1}$  determined by the inequalities \eqref{sec-eq0}--\eqref{eq:constable1} is equivalent to  the set given in \eqref{eq3:prop}.

\section{Summary}\label{con}
In this paper, we prove that the fully open extension of a sequestration network admits three nondegenerate positive steady states, two of which are locally asymptotically stable. The method we use to prove stability here is based on the Gershgorin circle theorem, which can be applied to more general chemical reaction networks. Moreover, we give an open region in the parameter space as well as explicit choices  of rate constants to ensure bistability. In the future, it would be interesting to invest the configuration of the positive steady states and study how the stability of them changes as the rate constants vary. Also, it is challenging to prove the maximum number of (stable) positive steady states for the fully open extension of a sequestration network (Conjecture \ref{conj}).

\section*{Acknowledgments}
%XT were partially supported by the NSF (DMS-1752672).
We would like to acknowledge Anne Shiu for her generous support and valuable advice. We also thank Badal Joshi for his comments on the first draft of our work.

%\bibliographystyle{siamplain}
%\bibliography{references}
\end{document}